\newcommand{\yy}{\vec{y}}
\newcommand{\xx}{\vec{x}}
\def \va     {{\vec \alpha}}
\def \vb    {{\vec \beta }}
\def \vr    {{\vec \gamma }}
\newcommand{\ZZ}{\mathbb{Z}_+}
\newcommand{\QQ}{\mathbb{Q}}
\newcommand{\RR}{\mathbb{R}}
\def \algrelate {{\tt MonomialRelation}}
\def \algexa {{\tt EXACTG}}
\def \algSOS{{\tt SOS}}
\def \algf{{\tt QuickSOS}}
\def \algpca{{\tt PCAG}}
\def \aisat{{\tt QuickSOS}}
\def \sostool{{\tt SOSTOOLS}}
\def \yalmip{{\tt YALMIP}}
\def \sosopt{{\tt SOSOPT}}
\def \DEF {{\stackrel{\text{\tiny def}}{=}}}
\def \VV {{\mathcal V}}
\def \CONV {{\rm conv}}
\def \NEW {{\tt N}}
\def \PROJ {{\tt Proj}}
\def \SUPPORT {{\tt S}}
\def \GSUP {{\mathbf Q}}
\def \GSUPZ{{{ \mathbf Q}^0}}
\def \PSUP{{{\mathbf P}}}
\def \PSUPE{{{\mathbf P}^e}}
\def \MAP{{\varphi}}
\def \RELAT {{\psi}_{\GSUP}}
\def \SOSS{{SOSS}}
\newcommand*\arc[3]{%
  \ncline{#1}{#2}
 }
\newtheorem{nota}{Notation}
\title{Smaller SDP for SOS Decomposition}
\author{Liyun Dai \and Bican Xia }
\institute{Liyun Dai(\Letter)
	\at  LMAM \& School of Mathematical Sciences, Peking University \\
	Beijing International Center for Mathematical Research, Peking University
	\\\email{dailiyun@pku.edu.cn}
	\and
	Bican Xia
	\at  LMAM \& School of Mathematical Sciences, Peking University
	\\\email{xbc@math.pku.edu.cn}
}
\begin{document}  \mag=1100
\label{firstpage}
\date{}
\maketitle

\begin{abstract}

A popular numerical method to compute SOS (sum of squares of polynomials) decompositions for polynomials is to transform the problem into semi-definite programming (SDP) problems and then solve them by SDP solvers. In this paper, we focus on reducing the sizes of inputs to SDP solvers to improve the efficiency and reliability of those SDP based methods. 
Two types of polynomials, convex cover polynomials and split polynomials, are defined. A convex cover polynomial or a split polynomial can be decomposed into several smaller sub-polynomials such that the original polynomial is SOS if and only if the sub-polynomials are all SOS. Thus the original SOS problem can be decomposed equivalently into smaller sub-problems. It is proved that convex cover polynomials are split polynomials and it is quite possible that sparse polynomials with many variables are split polynomials, which can be efficiently detected in practice. Some necessary conditions for polynomials to be SOS are also given, which can help refute quickly those polynomials which have no SOS representations so that SDP solvers are not called in this case. All the new results lead to a new SDP based method to compute SOS decompositions, which improves this kind of methods by passing smaller inputs to SDP solvers in some cases. Experiments show that the number of monomials obtained by our program is often smaller than that by other SDP based software, especially for polynomials with many variables and high degrees. Numerical results on various tests are reported to show the performance of our program.

\keywords{ SOS, SDP, Newton polytope, convex cover polynomial, split polynomial}
\end{abstract}

\section{Introduction}
\label{sec:intro}

Since Hilbert's seventeenth problem was raised in 1900, there has been a lot of work on SOS (sums of squares of polynomials) decomposition. To name a few, please see for instance  \cite{artin27,reznick1995uniform,shor1998nondifferentiable,nesterov2000squared,lasserre2001global,Parrilo03,parrilo2003minimizing,blekherman2006there}.

From an algorithmic view of point, writing a multivariate polynomial as an SOS to prove it is non-negative is a crucial part of many applications \cite{vand96,lasserre2001global,Parrilo03,kim05,sch05} though the number of non-negative polynomials are much more than the number of sum of squares polynomials \cite{blekherman2006there}. Numerical algorithms for SOS decompositions can handle big scale problems and can be used to get exact results \cite{kaltofen2008exact}. One main numerical method to solve SOS decomposition problem is to convert it to SDP problem. Actually, there exist some well-known free available SOS solvers which are based on SDP solvers \cite{PPP02,lofberg2004yalmip,seiler2013sosopt}.

Obviously, improving SDP solvers' efficiency can improve the efficiency of SDP based SOS solvers. For related work on improving SDP solvers' efficiency, please see for example \cite{nesterov1994interior,mont97,Stur98,wolkowicz2000handbook,renegar2001mathematical,todd2001semidefinite,mats12}.
It is known that, in the worst case, the size of corresponding SDP problem is $O({d+n \choose d})$ which is polynomial in both $n$ (the number of variables) and $d$  (the degree of given polynomial), if the other one is fixed. In practice, the size of corresponding SDP can be much smaller than $O({d+n \choose d})$ \cite{lofberg2009pre,waki2010facial,seiler2013simplification}.
Although the complexity of SDP is polynomial in $d$ and $n$, the actual complexity of SDP based SOS solvers are very high since the size of corresponding matrices of SDP is very large when the given polynomial has many variables and high degree.
Moreover, the results of existing SDP solvers may be not reliable for large problems \cite{DGXZ13}. In other words, it is important to reduce the size of corresponding SDP problem so as to improve both the efficiency and reliability of SDP based SOS solvers.

In many practical situations, we do not know more properties of the given polynomial except that the polynomial is sparse, {\it i.e.}, the number of monomials is much smaller than ${d+n \choose d}$. So how to take use of the sparsity to reduce the corresponding size of SDP is a key part to improve the efficiency of solving SOS decomposition problem. 
For related work on employing sparsity, see for instance \cite{renz78,kojima05,waki07}.
For SOS decomposition of a polynomial on an algebraic variety, a method which may yield smaller SDP is proposed in \cite{permenter2012selecting} by combining Gr\"obner basis techniques with Newton polytope reduction.

In this paper, we focus on reducing the sizes of inputs to SDP solvers to improve the efficiency and reliability of those SDP based methods. Two types of polynomials, convex cover polynomials and split polynomials, are defined. A convex cover polynomial or a split polynomial can be decomposed into several smaller sub-polynomials such that the original polynomial is SOS if and only if the sub-polynomials are all SOS. Thus the original SOS problem can be decomposed equivalently into smaller sub-problems. It is proved that convex cover polynomials are split polynomials and it is quite possible that sparse polynomials with many variables are split polynomials, which can be efficiently detected in practice. Some necessary conditions for polynomials to be SOS are also given, which can help refute quickly those polynomials which have no SOS representations so that SDP solvers are not called in this case. For example, the well-known Motzkin polynomial \cite{motzkin1967arithmetic} and Choi-Lam example \cite{choilam77} do not pass the check of the necessary conditions. All the new results lead to a new SDP based method to compute SOS decompositions, which improves this kind of methods by passing smaller inputs to SDP solvers in some cases. Experiments show that the number of monomials obtained by our program is often smaller than that by other SDP based software, especially for polynomials with many variables and high degrees. Numerical results on various tests are reported to show the performance of our program.

The rest part of this paper is organized as follows.  Section \ref{sec:pro} describes some notations and existing results on SOS, which will be used in this paper. Convex cover polynomial is defined and its property is proved based on the convex property of corresponding Newton polytopes in Section \ref{sec:work}. Split polynomial is defined and its property is proved based on monomial relation analysis in Section \ref{sec:mon}. Moreover, the relationship between the two types of polynomials is given also in Section \ref{sec:mon}. A new algorithm for SOS decomposition based on those new results is presented in Section \ref{sec:imp}. We report some experimental data of our program with comparison to other SDP based tools in Section \ref{sec:exp}.

\section{Preliminary}\label{sec:pro}

The symbols $\mathbb{Z}, \ZZ, \QQ$ and $\RR$ denote the set of integers, natural numbers, rational numbers and real numbers, respectively. If not specified, ``polynomials" in this paper are polynomials with real coefficients and are often denoted by $p,q,f,g$, etc.. By ``vectors" we mean vectors in $\ZZ^n$ (or $\RR^n$) which are denoted by $\va, \vb, \vr$, etc.. We use $\xx, \yy$ denote the variable vectors $(x_1,\ldots,x_n), (y_1,\ldots,y_n)$, respectively. A hyperplane in $\RR^n$ is denoted by $\pi(\xx)=0$.

Consider a polynomial
\begin{equation}
	p(\xx)=\sum_{\va\in {\PSUP}}c_{\va}\xx^{\va}
	\label{eq:poly}
\end{equation}
in the variable vector $\xx\in \RR^n$ with a {\em support} ${\PSUP}\subseteq
\ZZ^n$, where $\ZZ\DEF\{x\in \mathbb{Z},x\ge0\}$  and real coefficients $c_{\va}\neq 0\ (\va\in {\PSUP})$. Denote by
$\SUPPORT (p)$ the support of a polynomial $p$. For example, if $p=1+x_1^2+x_2^3$, then $n=2,\SUPPORT (p)=\{(0,0),(2,0),(0,3)\}$.
When $p=0,$ define $ \SUPPORT(p)=\emptyset$.

For any $T\subseteq \RR^n$ and $k\in\RR$, denote by $kT$ the set $\{k\va \mid  \va\in T\}$, where   $k(a_1,\dots,a_n)=(ka_1,\dots,ka_n)$, and
by {\rm conv}$(T)$ the convex hull of  $T$.
Let $\PSUPE$ be the set of
$\va\in {\PSUP}$ whose coordinates $\va_k\ (k=1,2,\dots,n)$
are all even non-negative integers, {\it i.e.}, ${\PSUPE}={\PSUP}\cap(2\ZZ^n)$.


Obviously, $p(\xx)$ can be represented in terms of a sum of squares of
polynomials or in short, $p$ is SOS, if and only if there exist polynomials $q_1(\xx),\dots,q_s(\xx)\in \RR[\xx]$ such that
\begin{equation}
	\label{eq:sum}
	p(\xx)=\sum_{i=1}^sq_i(\xx)^2.
\end{equation}
To find both $s$ and polynomials $q_1(\xx),\ldots,q_s(\xx)$, it is necessary to estimate and decide the supports
of unknown polynomials $q_i(\xx)  (i=1,\dots,s)$. 
Let ${\GSUP}_i$ be an unknown support of the polynomial
$q_i(\xx)\ (i=1,\dots,s)$. Then each polynomial $q_i(\xx)$ is
represented as

\begin{equation}
  \label{eq:sing}
	q_i(\xx)=\sum_{\va\in {\GSUP}_i}c_{(i,\va)}\xx^{\va}
\end{equation}
with  nonzero  coefficients $c_{(i,\va)}\ (\va\in
{\GSUP}_i,\ i=1,\dots,s)$.

Suppose $p(\xx)$ is of the form (\ref{eq:sum}), then 
${\PSUP}\subseteq {\rm conv}(\PSUPE)$ \cite{renz78}. The
following relation is also known by \cite{renz78}:
\begin{equation}
\label{eq:sup}
	\left\{ \va\in \ZZ^n :\va\in {\GSUP}_i \mbox{ and }
	c_{(i,\va)}\neq 0 \mbox{ for some } i\in \{  1,2,\dots,s \} \right\}  \subseteq \frac{1}{2} {\rm conv}(\PSUPE).
\end{equation}

Hence we can confine effective supports of unknown polynomials
$q_1(\xx),\ldots,q_s(\xx)$ to subsets of
\begin{equation}
	\label{eq:subg}
	{\GSUP}^0=\left(\frac{1}{2}{\rm conv}({\PSUPE})\right)\cap \ZZ^n.
\end{equation}

\begin{definition}\label{def:soss}
For a polynomial $p$, a set $\GSUP\subseteq \ZZ^n$ is said to satisfy the relation $\SOSS(p,\GSUP)$ (SOSS stands for SOS support) if $$ p \mbox{ is SOS }\implies \mbox{ there exist } q_i (i=1,\ldots, s) \mbox{ such that }  p=\sum_{i=1}^sq_i^2 \mbox{ and } \SUPPORT(q_i)\subseteq \GSUP.  $$
\end{definition}

For every given $p$, the problem is how to find a small $\GSUP$ such that  $\SOSS(p,{\GSUP})$ holds, {\it i.e.},  prune more unnecessary monomials
from the decomposition. In general, one can start from  a coarse  ${\GSUP}$, keep eliminating elements of ${\GSUP}$ which does not satisfy some conditions, and finally  obtain a smaller ${\GSUP}$.   Obviously, ${\GSUP}^0$ of (\ref{eq:subg}) satisfies $\SOSS(p,{\GSUP}^0)$ for every given $p$.
If $q_i$ satisfies (\ref{eq:sum}), the relation $\SOSS(p,\cup_i\SUPPORT(q_i))$ holds.

There are two possible approaches to construct $q_1,\cdots,q_s$.

One approach assumes polynomials $q_1,\cdots,q_s$ do not share common support. Then each polynomial $q_i(\xx)$ is represented as Eq. (\ref{eq:sing}).
Unfortunately, it is difficult to find exact ${\GSUP}_i$ if we do not know more information of $p$. But when $p$ is correlatively sparse,
a correlative sparsity pattern graph is defined in \cite{waki07}  to find a certain sparse structure  in $p$. And this structure  can be used to
decide different relaxed ${\GSUP}_i$. Theoretically, the relaxations in \cite{waki07} are not guaranteed to generate lower bounds with the same quality as those generated by the original SOS representation.

The other approach assumes that all polynomials
$q_1(\xx),\dots,q_s(\xx)$ share a common unknown support
${\GSUP}\subseteq {\GSUP}^0$ and   each
polynomial $q_i(\xx)$ is represented as
\begin{equation}
	\label{eq:rep}
	q_i(\xx)=\sum_{\va\in {\GSUP}}c_{(i,\va)}\xx^{\va}.
\end{equation}

Then Eq. (\ref{eq:sum}) is equivalent to the existence of a
positive semi-definite matrix $M$ such that
\begin{equation}
	\label{eq:SOS1}
	p(\xx)={\GSUP}^T(\xx)M{\GSUP}(\xx),
\end{equation}
where ${\GSUP}(\xx)$ is a vector of monomials corresponding to the support $\GSUP$. So in the view of practical computing, finding the SOS representation is equivalent to solving the feasibility problem of (\ref{eq:SOS1}). 
Thus, the original problem can be solved by SDP solvers. This approach was presented in  \cite{PPP02,kim05,lofberg2004yalmip,seiler2013sosopt}. There are close connections between SOS polynomials and positive semi-definite matrices \cite{choi1995sums,powers1998algorithm,Parrilo00,lasserre2001global}.

\begin{nota}
\label{not:1}
We denote  by $\algSOS(p,{\GSUP})$ an algorithm of finding positive semi-definite matrix $M$ with $\GSUP$ under constraints (\ref{eq:SOS1}).
\end{nota}

Let us give a  rough complexity analysis of $\algSOS(p,{\GSUP})$.
Let $n=\#(\GSUP)$, the number  of elements contained in $\GSUP$. Then the size of matrix $M$ in (\ref{eq:SOS1}) is $n\times n$.
Let $m$ be  the number of different elements occurring in $\GSUP\GSUP^T$. It is easy to know $n\le m \le n^2$. Suppose $m=O(n^c),c\in [1,2]$ and we use {\em interior point method} in $\algSOS(p,{\GSUP})$, which is a main method for solving SDP. Then the algorithm will repeatedly solve {\em least squares} problems with $m$ linear constraints and $\frac{(n+1)n}{2}$ unknowns.
Suppose that the least squares procedure is called  $k$ times. Then, the total complexity is $O(k  n^{2+2c})$. So, if $n$ becomes $2n$, the time consumed  will increase by at least $16$ times.
So reducing $\GSUP$'s size is a key point to improve such algorithms.

\section{Convex cover polynomial}
\label{sec:work}

We give a short description of Newton polytope in Section \ref{sec:con}. In Section \ref{sec:cpro}, we first prove a necessary condition (Theorem \ref{the:1}) for a polynomial to be SOS based on the properties of Newton polytope. Then a new concept, {\em convex cover polynomial} (Definition \ref{def:cv}), is introduced, which leads to the main result (Theorem \ref{cor:2}) of this section, that is, a convex cover polynomial is SOS if and only if some smaller polynomials are SOS.

\subsection{Newton polytope}
\label{sec:con}

\textit{Newton polytope} is a classic tool. We only introduce some necessary notations here. For
formal definitions of the concepts, please see for example \cite{sturm98}. A {\em polytope} is a subset of $\RR^n$ that is the convex hull of a finite
set of points. A simple example is the convex hull of
$\{(0,0,0),(0,1,0),(0,0,1),(0,1,1),(1,0,0),
(1,1,0),(1,0,1)(1,1,1)\}$
in $\RR^3$; this is the regular $3$-cube. A $d$-dimensional polytope
has many {\em faces}, which are again polytopes of various dimensions
from $0$ to $d-1$. The $0$-dimensional faces are called {\em vertices},
the $1$-dimensional faces are called {\em edges}, and the
$(d-1)$-dimensional faces are called {\em facets}. For instance, the cube
has $8$ vertices, $12$ edges, and $6$ facets. If $d=2$ then the edges
coincide with the facets. A $2$-dimensional polytope is called a
\textit{polygon}.

For a given polynomial $p$, each term $\xx^{\va}=x_1^{a_1}\cdots x_n^{a_n}$ appearing in $p$
corresponds to an integer lattice point $(a_1,\ldots,a_n)$ in
$\RR^n$. The convex hull of all these points (called the {\em support} of $p$) is defined as
\textit{Newton polytope} of $p$ and is denoted by
$$\NEW(p)\DEF \CONV( \SUPPORT (p)).$$

\begin{definition}
  For a polynomial $p=\sum_{\va}c_{\va}\xx^{\va}$ and a set $T\subseteq \RR^n$, denote by
  $\PROJ (p,T)$ the polynomial obtained by deleting the terms $c_{\va}\xx^{\va}$ of $p$ where
  $\va \not \in (T\cap \ZZ^n)$.
\end{definition}

\begin{example}
  \label{ex:1}

  Let $p=2x_1^{4}+4x_2^{4}-3 x_3^{2}+1$ and
  $T=\{(0,0,0),(1,0,0),(4,0,0)\}$, then $\PROJ (p,T)$ $=2x_1^4+1$.

\end{example}

\subsection{Convex cover polynomial}
\label{sec:cpro}

We guess that Theorem \ref{the:1} in this section should be a known result. However, we do not find a proof in the literature. So, we prove it here. 
Since the results of the following Lemma \ref{lem:pre} are either obvious or known, we omit the proofs.

\begin{lemma}
\label{lem:pre}
  \begin{itemize}
  \item   For any two polynomials $f,g$, two real numbers $k_1,k_2$  and any
    $T\subseteq \ZZ^n$, $$\PROJ (k_1f+k_2g,T)=k_1\PROJ (f,T)+k_2\PROJ (g,T).$$
  \item   For any $T\subseteq \ZZ^n$ and any $k\in
    \RR\setminus\{0\}$, we have $k(\frac{1}{k}T\cap\ZZ^n)\subseteq T .$

  \item   Suppose  $N$ is an $n$-dimensional polytope. For any face $F$ of $N$,
  there is an $(n-1)$-dimensional  hyperplane $\pi(\yy)=0$ such that
  $\pi(\va)=0$ for any $ \va\in F$ and $\pi(\vb)>0 $ for any $ \vb \in
  N\setminus F$.
  \item   Suppose  $\pi(\yy)=0$ is a hyperplane and
  $F \subseteq \ZZ^n \cap (\pi(\yy)=0)$. For any polynomial
  $p=\sum_{\va}c_{\va}\xx^{\va}$ in $n$ variables, we have $$\SUPPORT(\PROJ
  (p,F)) \subseteq \SUPPORT(\PROJ(p, \SUPPORT(p)\cap (\pi(\yy)=0))).$$
   \item   If $f,g$ are two polynomials and $\SUPPORT(f)\cap\SUPPORT(g)=\emptyset
  $, then $\SUPPORT(f+g)=\SUPPORT(f)\cup \SUPPORT(g)$.
  \item   Let $T_1=\SUPPORT(f)$ and $T_2=\SUPPORT(g)$ for two polynomials $f$ and $g$.
  Then $\SUPPORT(fg)\subseteq T_1+T_2$, where $T_1+T_2$ is the {\em Minkowski
    sum} of $T_1$ and $T_2$.

  \end{itemize}

\end{lemma}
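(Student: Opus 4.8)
The six assertions split into two groups. Five of them---the linearity of $\PROJ$, the inclusion $k(\frac{1}{k}T\cap\ZZ^n)\subseteq T$, the fourth item comparing two projections, the additivity of $\SUPPORT$ on a disjoint sum, and the Minkowski bound on $\SUPPORT(fg)$---are pure bookkeeping: each follows by writing the polynomials involved as $\sum_{\va}c_{\va}\xx^{\va}$ over a finite common index set, allowing zero coefficients, and chasing which exponents survive. The remaining one, the third, is the only item with geometric content, and for it the plan is to quote the classical supporting-hyperplane description of faces of a polytope.

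For the linearity of $\PROJ$ I would note that $\PROJ(\cdot,T)$ is nothing but the restriction of the coefficient sequence to the indices lying in $T\cap\ZZ^n$, and restriction is linear; the only wrinkle is that a combined coefficient $k_1c_{\va}+k_2c'_{\va}$ may vanish, which merely shrinks the support and changes nothing. For $k(\frac{1}{k}T\cap\ZZ^n)\subseteq T$ one has $\frac{1}{k}T\cap\ZZ^n\subseteq\frac{1}{k}T$ and multiplies both sides by $k$. For the disjoint-support additivity, at each exponent at most one of the two coefficients is nonzero, so no cancellation can occur and $\SUPPORT(f+g)=\SUPPORT(f)\cup\SUPPORT(g)$. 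For the product bound, every monomial occurring in $fg$ has the form $\xx^{\va+\vb}$ with $\va\in T_1$, $\vb\in T_2$ before like terms are collected, hence $\SUPPORT(fg)\subseteq T_1+T_2$ after collection.

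For the fourth item the useful identity to record first is $\SUPPORT(\PROJ(p,T))=\SUPPORT(p)\cap T\cap\ZZ^n$, which is immediate from the definition of $\PROJ$. Applying it with $T=F$ and using $F\subseteq\ZZ^n$ gives $\SUPPORT(\PROJ(p,F))=\SUPPORT(p)\cap F$; applying it with $T=\SUPPORT(p)\cap(\pi(\yy)=0)$ and using $\SUPPORT(p)\subseteq\ZZ^n$ gives that the right-hand side equals $\SUPPORT(p)\cap(\pi(\yy)=0)$; and the hypothesis $F\subseteq(\pi(\yy)=0)$ then yields the stated inclusion (which is generally proper).

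The one genuinely non-mechanical item is the third. Here I would invoke the standard fact (see, e.g., \cite{sturm98}) that every proper face $F$ of a full-dimensional polytope $N\subseteq\RR^n$ has the form $F=\{\va\in N:\ell(\va)=c_0\}$ for some nonzero linear functional $\ell$ with $\ell(\va)\le c_0$ for all $\va\in N$. Setting $\pi(\yy)\DEF c_0-\ell(\yy)$ produces an $(n-1)$-dimensional hyperplane $\pi(\yy)=0$ with $\pi\equiv0$ on $F$ and $\pi\ge0$ on $N$; and if $\vb\in N$ satisfies $\pi(\vb)=0$ then $\vb\in F$ by the very definition of a face, so in fact $\pi(\vb)>0$ for every $\vb\in N\setminus F$. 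The single point that needs attention is the full-dimensionality of $N$: that is what guarantees $\ell\ne0$, hence that $\pi(\yy)=0$ is a genuine $(n-1)$-dimensional hyperplane and not all of $\RR^n$; when the lemma is later applied to a Newton polytope one should either assume it to be $n$-dimensional or pass to its affine hull. I do not expect any real obstacle here---as the authors say, these are obvious or known facts---so the only ``work'' is to phrase the face/hyperplane correspondence in exactly the form (vanishing on $F$, strictly positive elsewhere on $N$, codimension one) that Section \ref{sec:cpro} needs.
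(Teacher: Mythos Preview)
Your proposal is correct; in fact the paper omits the proof of this lemma entirely, stating that the results ``are either obvious or known.'' Your write-up is more careful than what the paper provides (in particular your remark on full-dimensionality in the third item), so there is nothing to compare against and nothing to fix.
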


\begin{lemma}
  \label{le:5}
  Suppose	$\pi(\yy)=0$ is a hyperplane, $T\subseteq \ZZ^n$   and
  $f,g$  are two $n$-variate
  polynomials. Let $T_1=\SUPPORT (f),T_2=\SUPPORT (g)$.  If  $ T\subseteq
  \{\yy \mid \pi(\yy)=0\}, 2T_1\subseteq \{\yy \mid \pi(\yy)\geq 0\}$ and
  $2T_2\subseteq \{\yy \mid \pi(\yy)>0\}$, then $\PROJ (fg,T)=0$.

\end{lemma}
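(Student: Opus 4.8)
The plan is to show that every monomial $\xx^{\vr}$ appearing in the product $fg$ has exponent $\vr$ lying strictly on the positive side of the hyperplane scaled by $2$, so that $\vr \notin T$ and hence the projection onto $T$ kills it. First I would recall from the last item of Lemma \ref{lem:pre} that $\SUPPORT(fg) \subseteq T_1 + T_2$, the Minkowski sum; so it suffices to analyze an arbitrary element $\vr = \va + \vb$ with $\va \in T_1$ and $\vb \in T_2$. The hypotheses say $2\va \in \{\yy \mid \pi(\yy) \ge 0\}$ and $2\vb \in \{\yy \mid \pi(\yy) > 0\}$; the subtlety is that $\pi$ is an affine function, not necessarily linear, so $\pi(\va + \vb)$ is not simply $\pi(\va) + \pi(\vb)$, and likewise $\pi(2\va)$ is not $2\pi(\va)$. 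I would handle this by writing $\pi(\yy) = \ell(\yy) + c$ where $\ell$ is the linear part and $c = \pi(\mathbf{0})$ is the constant term, so that $\pi(2\va) = 2\ell(\va) + c \ge 0$ and $\pi(2\vb) = 2\ell(\vb) + c > 0$ give $\ell(\va) \ge -c/2$ and $\ell(\vb) > -c/2$. Adding, $\ell(\va) + \ell(\vb) > -c$, hence $\pi(\va+\vb) = \ell(\va) + \ell(\vb) + c > 0$.

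Next, since $\pi(\vr) > 0$ for every $\vr \in \SUPPORT(fg)$, while $T \subseteq \{\yy \mid \pi(\yy) = 0\}$, we get $\vr \notin T$ for all such $\vr$; in particular $\vr \notin T \cap \ZZ^n$. By the definition of $\PROJ$, $\PROJ(fg, T)$ is obtained from $fg$ by deleting exactly the terms whose exponent is not in $T \cap \ZZ^n$ — and that is all of them — so $\PROJ(fg, T) = 0$.

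The one place that needs a little care is making sure the Minkowski-sum containment is applied correctly when $f$ or $g$ has a nontrivial support with cancellation in the product: the statement $\SUPPORT(fg) \subseteq T_1 + T_2$ already accounts for this, since cancellation only removes terms, never creates new exponents, so no extra argument is needed there. I also want to be careful about the degenerate cases $f = 0$ or $g = 0$, where $T_1$ or $T_2$ is empty; then $fg = 0$ and the conclusion is trivial, so these can be dispatched in one line at the start. The main (and only real) obstacle is the affine-versus-linear issue for $\pi$ flagged above; once that is resolved by splitting off the constant term, the rest is a direct chain of set inclusions.
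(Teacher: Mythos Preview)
Your proposal is correct and follows essentially the same route as the paper: use $\SUPPORT(fg)\subseteq T_1+T_2$, then show every element of $T_1+T_2$ lies in $\{\pi>0\}$ and hence misses $T$. The only cosmetic difference is that the paper handles the affine issue via the midpoint identity $\pi(\va_1+\va_2)=\tfrac{1}{2}\bigl(\pi(2\va_1)+\pi(2\va_2)\bigr)$ rather than splitting $\pi$ into linear part plus constant.
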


\begin{proof}
  By Lemma \ref{lem:pre}, $\SUPPORT(fg)\subseteq T_1+T_2$. By the definition of Minkowski sum,
  for  any $\va\in T_1+T_2$  there exist
  $\va_1\in T_1 , \va_2 \in T_2$ such that
  $\va=\va_1+\va_2$. Because $ \pi(2\va_1)\geq 0$ and $\pi(2\va_2)>0$,
  $\pi(\va)=\pi(\va_1+\va_2)=\frac{1}{2}(\pi(2\va_1)+\pi(2\va_2) )>0$.
  So $T_1+T_2\subseteq  \{\yy \mid
  \pi(\yy)>0\}$. Thus, $\SUPPORT(fg)\cap (\pi(\yy)=0)=\emptyset$ which implies $\PROJ(fg,T))=0$ by Lemma
  \ref{lem:pre} and $T\subseteq (\pi(\yy)=0)$. \qed
\end{proof}

\begin{lemma}
  \label{le:10}

  Suppose $p=\sum_{i=1}^sq_i^2$ and $F$ is a face of
  $\NEW(p)$. Let  $F_z=F\cap\ZZ^n,F_{\frac{z}{2}}=\frac{1}{2}F\cap \ZZ^n,
  q_i'=\PROJ(q_i,F_{\frac{z}{2}}),$ $q_i''=q_i-q_i',T_i'=\SUPPORT(q_i')$ and $T_i''=\SUPPORT(q_i'')$,
  then there is a hyperplane $\pi(\yy)=0$ such that
  \begin{enumerate}
  \item[(1)] $F\subseteq \{\yy \mid \pi(\yy)=0\}$,
  \item[(2)] $2T_i'\subseteq \{\yy \mid \pi(\yy)=0\}$, and
  \item[(3)] $ 2T_i''\subseteq	\{ \yy \mid \pi(\yy)>0\}$.
  \end{enumerate}
\end{lemma}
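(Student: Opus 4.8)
The plan is to take $\pi(\yy)=0$ to be a supporting hyperplane of the face $F$ and then to verify (1), (2), (3) one at a time. Concretely, write $N\DEF\NEW(p)$; since $F$ is a (proper) face of $N$, the third item of Lemma \ref{lem:pre} provides a hyperplane $\pi(\yy)=0$ with $\pi(\va)=0$ for every $\va\in F$ and $\pi(\vb)>0$ for every $\vb\in N\setminus F$, so that $\pi\ge 0$ on all of $N$. Condition (1) then holds by construction, and (2) is nearly as immediate: by the definition of $\PROJ$ we have $T_i'=\SUPPORT(q_i')\subseteq F_{\frac{z}{2}}=\frac{1}{2}F\cap\ZZ^n$, so $2\va\in F$ and hence $\pi(2\va)=0$ for every $\va\in T_i'$, giving $2T_i'\subseteq\{\yy\mid\pi(\yy)=0\}$.

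The real content is (3). First I would combine $p=\sum_i q_i^2$ with (\ref{eq:sup}) to obtain $\SUPPORT(q_i)\subseteq\frac{1}{2}\CONV(\PSUPE)\subseteq\frac{1}{2}N$, i.e., $2\va\in N$ for every exponent $\va$ occurring in $q_i$. Since $q_i'$ and $q_i''$ are formed from complementary sets of terms of $q_i$, we have $T_i''=\SUPPORT(q_i)\setminus F_{\frac{z}{2}}$. Now fix $\va\in T_i''$: then $2\va\in N$, so $\pi(2\va)\ge 0$; and if $\pi(2\va)=0$ then $2\va$ cannot lie in $N\setminus F$, forcing $2\va\in F$, i.e., $\va\in\frac{1}{2}F\cap\ZZ^n=F_{\frac{z}{2}}$, contradicting $\va\in T_i''$. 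Hence $\pi(2\va)>0$, which is (3).

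The step I expect to need the most care is the inclusion $\SUPPORT(q_i)\subseteq\frac{1}{2}N$: it is precisely what lets us apply both ``$\pi\ge 0$ on $N$'' and ``$\pi>0$ strictly off $F$'' to the doubled exponents $2\va$, and (3) genuinely relies on the strict inequality here. Everything else is routine bookkeeping about which monomials of $q_i$ are retained in $q_i'$ and which are moved into $q_i''$. (If one prefers not to assume $N$ is full-dimensional when invoking Lemma \ref{lem:pre}, one can work within the affine hull of $N$, with no change to the argument.)
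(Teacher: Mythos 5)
Your proof is correct and follows essentially the same route as the paper's: the same supporting hyperplane from Lemma \ref{lem:pre}, the inclusion $2T_i'\subseteq F$ for (2), and for (3) the combination of Eq. (\ref{eq:sup}) (giving $2T_i''\subseteq \NEW(p)$, hence $\pi\ge 0$ there) with the contradiction argument that $\pi(2\va)=0$ would force $\va\in F_{\frac{z}{2}}$, contrary to $T_i''\cap F_{\frac{z}{2}}=\emptyset$.
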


\begin{proof}

  By Lemma \ref{lem:pre}, there is a hyperplane $\pi(\yy)=0$ such that $\forall
  \va\in F, \pi(\va)=0$ and $\forall \va\in \NEW(p)\setminus
  F, \pi(\va)>0$. We  prove that   $\pi$   is a  hyperplane which
  satisfies the requirement. First, because $T_i'\subseteq F_{\frac{z}{2}}$, by
  Lemma  \ref{lem:pre}, $2T_i'\subseteq 2F_{\frac{z}{2}}\subseteq F$ and thus $2T_i'\subseteq \{\yy \mid \pi(\yy)=0\}$.

  Second, it is obvious that   $T_i ''\cap F_{\frac{z}{2}}=\emptyset$,  $T_i'\cap T_i''=\emptyset$ and $T_i'\cup T_i''= T_i$ where $T_i=\SUPPORT(q_i)$. By Equation (\ref{eq:sup}), we have $T_i
  \subseteq \frac{1}{2}\NEW(p)$ and $ 2T_i\subseteq \NEW(p)$. Thus  $ 2T_i''\subseteq 2T_i\subseteq \NEW(p) \subseteq \{\yy \mid \pi(\yy) \geq 0 \}$. If there is an $\va\in T_i''$ such that
  $\pi(2\va)=0$, then  $\va\in
  F_{\frac{z}{2}}$, which contradicts with $ T_i ''\cap
  F_{\frac{z}{2}}=\emptyset$. Therefore, $ 2T_i''\subseteq
  \{ \yy \mid \pi(\yy)>0\}$. \qed
\end{proof}

Using the above lemmas, we prove Theorem \ref{the:1} now.

\begin{theorem}
  \label{the:1}
  If $p$ is SOS, then $\PROJ (p,F)$ is SOS for every face $F$	of $\NEW(p)$.
\end{theorem}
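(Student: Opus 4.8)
The plan is to take an arbitrary SOS representation $p = \sum_{i=1}^s q_i^2$ and show that projecting onto a face $F$ of $\NEW(p)$ produces an SOS representation of $\PROJ(p,F)$. The natural candidate is $\PROJ(p,F) = \sum_{i=1}^s (q_i')^2$, where $q_i' = \PROJ(q_i, F_{z/2})$ with $F_{z/2} = \tfrac12 F \cap \ZZ^n$, exactly the splitting introduced in Lemma \ref{le:10}. So the whole theorem reduces to verifying this identity; the squares on the right-hand side make it manifestly SOS.

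First I would invoke Lemma \ref{le:10} to obtain a hyperplane $\pi(\yy)=0$ with $F \subseteq \{\pi = 0\}$, $2T_i' \subseteq \{\pi = 0\}$ and $2T_i'' \subseteq \{\pi > 0\}$, where $q_i'' = q_i - q_i'$ and $T_i', T_i''$ are their supports. Then I would expand
$$
p = \sum_{i=1}^s q_i^2 = \sum_{i=1}^s (q_i')^2 + 2\sum_{i=1}^s q_i' q_i'' + \sum_{i=1}^s (q_i'')^2,
$$
and apply $\PROJ(\cdot, F)$ to both sides, using linearity of $\PROJ$ from Lemma \ref{lem:pre}. The claim is that $\PROJ(\cdot,F)$ kills the last two sums. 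For the cross terms $q_i' q_i''$: we have $2T_i' \subseteq \{\pi = 0\}$ and $2T_i'' \subseteq \{\pi > 0\}$, so Lemma \ref{le:5} (with $f = q_i'$, $g = q_i''$, $T = F$) gives $\PROJ(q_i' q_i'', F) = 0$. For the terms $(q_i'')^2$: here both factors have support $T_i''$ with $2T_i'' \subseteq \{\pi > 0\}$, so again Lemma \ref{le:5} applies and $\PROJ((q_i'')^2, F) = 0$. Hence $\PROJ(p,F) = \sum_{i=1}^s \PROJ((q_i')^2, F)$.

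The remaining step is to check $\PROJ((q_i')^2, F) = (q_i')^2$, i.e.\ that squaring $q_i'$ and then restricting to $F$ changes nothing. Since $2T_i' \subseteq F \subseteq \{\pi = 0\}$ (the first containment is from Lemma \ref{le:10}'s proof, as $2T_i' \subseteq 2F_{z/2} \subseteq F$), and $\SUPPORT((q_i')^2) \subseteq T_i' + T_i'$ by the Minkowski-sum bound in Lemma \ref{lem:pre}, every exponent appearing in $(q_i')^2$ lies on the hyperplane; one must still argue it lies in $F$ itself rather than merely on the supporting hyperplane. This follows because $T_i' + T_i' \subseteq \NEW(p)$ (from $2T_i \subseteq \NEW(p)$) and $\NEW(p) \cap \{\pi = 0\} = F$ by the defining property of $\pi$, so $\SUPPORT((q_i')^2) \subseteq F \cap \ZZ^n$ and no term is deleted by the projection. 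Summing over $i$ then yields $\PROJ(p,F) = \sum_{i=1}^s (q_i')^2$, which is SOS.

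I expect the main subtlety to be precisely this last point — making sure that $\PROJ$ restricted to $F$ acts as the identity on each $(q_i')^2$, which requires knowing that the support of $(q_i')^2$ not only lies on the supporting hyperplane of $F$ but is actually contained in $F$. This is where one genuinely uses that $F$ is a face of $\NEW(p)$ (so that $\NEW(p) \cap \{\pi=0\}$ equals $F$) together with the support bound $2T_i \subseteq \NEW(p)$ from Equation (\ref{eq:sup}); everything else is bookkeeping with linearity of $\PROJ$ and two applications of Lemma \ref{le:5}.
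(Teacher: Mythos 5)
Your proposal is correct and follows essentially the same route as the paper's proof: decompose each $q_i$ as $q_i'+q_i''$ via projection onto $\tfrac12 F\cap\ZZ^n$, use Lemma \ref{le:10} to get the supporting hyperplane, kill the cross terms and the $(q_i'')^2$ terms with Lemma \ref{le:5}, and conclude $\PROJ(p,F)=\sum_i (q_i')^2$. The only difference is that you spell out the final step ($\SUPPORT((q_i')^2)\subseteq F\cap\ZZ^n$, via convexity of $\NEW(p)$ and $\NEW(p)\cap\{\pi=0\}=F$) more explicitly than the paper, which simply asserts it.
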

\begin{proof}
  Suppose $p=\sum_{i=1}^sq_i^2$ and $F$ is a face of
  $\NEW(p)$. Let  $F_z=F\cap\ZZ^n,
  q_i'=\PROJ(q_i,\frac{1}{2}F_z)$ and $q_i''=q_i-q_i'$. Then
  $p=\sum_{i=1}^s(q_i'+q_i'')^2=\sum_{i=1}^sq_i'^2+2\sum_{i=1}^sq_i'q_i''+\sum_{i=1}^sq_i''^2$. By Lemma \ref{lem:pre},
  $\PROJ (p,F_z)=\sum_{i=1}^s\PROJ (q_i'^2,F_z)+2\sum_{i=1}^s\PROJ (q_i'q_i'',
  F_z)+\sum_{i=1}^s\PROJ (q_i''^2,F_z)$. By Lemma \ref{le:10}, there is a hyperplane $\pi(\yy)=0$ such that
  (1) $\forall \va\in F, \pi(\va)=0$; (2) $\forall \va\in
  \NEW(p)\setminus F, \pi(\va)>0$; (3) for any $q_i',2\SUPPORT (q_i')\subseteq
  \{\yy \mid \pi(\yy)=0\}$; and (4) $2\SUPPORT (q_i'')\subseteq \{\yy \mid \pi(\yy)>0\}$. By Lemma \ref{le:5},
  $\PROJ (q_i'q_i'',F_z)=0$ and $\PROJ (q_i''^2,F_z)=0$. Therefore,  the intersect between  support of $2\sum_{i=1}^sq_i'q_i''+\sum_{i=1}^sq_i''^2$
  and $F_z$ is an emptyset, \it{i.e.},
  $\PROJ (p,F)=\PROJ (p,F_z)=\sum_{i=1}^s\PROJ (q_i'^2,F_z)$   $=\sum_{i=1}^s(q_i')^2$. The last equality holds because
  $\SUPPORT (q_i'^2)\subseteq F_z$. \qed
\end{proof}

\begin{remark}
Theorem \ref{the:1} is strongly related to Theorem 3.6 of \cite{reznick1989forms}, which states that   if $p$ is positive semidefinite, then $\PROJ (p,F)$ is positive semidefinite  for every face $F$	of $\NEW(p)$.

\end{remark}

Theorem \ref{the:1} proposes a necessary condition for a polynomial to be SOS.

\begin{example}
  \label{ex:3}
  $p=x_1^{4}+x_2^{4}+x_3^{4}-1.$
\end{example}

Obviously, the polynomial in Example \ref{ex:3} is not SOS ({\it e.g.}, $p(0,0,0)=-1$).
By Theorem \ref{the:1}, one necessary condition for $p$ to be SOS is that $\PROJ (p,\{(0,0,0)\})=-1$ should be SOS which can be efficiently checked. On the other hand, if we use Newton polytope based method to construct $\GSUP$ in (\ref{eq:SOS1}), the size of $\GSUP$ is ${3+2 \choose 2}=10$. The number of constraints is ${3+4 \choose 4}=35$.

\begin{definition}[Convex cover polynomial]
  \label{def:cv}
  A polynomial $p$ is said to be a {\em convex cover  polynomial} if there exist some pairwise disjoint
  faces $F_i (i=1,\dots, u)$  of $\NEW(p)$ such that 
  $\SUPPORT(p)\subseteq \cup_{i=1}^uF_i $.
\end{definition}

It is easy to get the following proposition by the definition of convex cover  polynomial.  
\begin{proposition}
  \label{lem:int}
  The support of a convex cover  polynomial does not intersect the interior of its Newton polytope.
\end{proposition}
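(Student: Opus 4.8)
The plan is to reduce the statement to the elementary fact that a proper face of a polytope contains no point of the (relative) interior of that polytope, and then to conclude from the covering hypothesis in Definition~\ref{def:cv}. Write $N=\NEW(p)$ and let $F_1,\dots,F_u$ be pairwise disjoint faces of $N$ with $\SUPPORT(p)\subseteq\bigcup_{i=1}^u F_i$, as supplied by the hypothesis that $p$ is a convex cover polynomial; as in Section~\ref{sec:con}, ``face'' here means a face of dimension between $0$ and $\dim N-1$, i.e.\ a proper face. Since $\SUPPORT(p)$ is covered by the $F_i$, it suffices to show that each $F_i$ avoids the interior of $N$: the union $\bigcup_i F_i$ then avoids it as well, and a fortiori so does $\SUPPORT(p)$.

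First I would fix one of these faces, say $F=F_i$, and apply the third item of Lemma~\ref{lem:pre} inside the affine hull of $N$ (so that $N$ is full-dimensional there): this yields a hyperplane $\pi(\yy)=0$ with $\pi(\va)=0$ for every $\va\in F$ and $\pi(\vb)>0$ for every $\vb\in N\setminus F$. In particular $\pi\ge 0$ on all of $N$, and since $F$ is a proper face we have $N\setminus F\ne\emptyset$, so $\pi$ takes a strictly positive value somewhere on $N$.

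Next I would argue by contradiction. Suppose some $\va_0\in F$ lies in the relative interior of $N$, and pick any $\vb_0\in N\setminus F$, so that $\pi(\vb_0)>0$ and $\vb_0\ne\va_0$. By the standard property of the relative interior of a convex set, the segment from $\vb_0$ through $\va_0$ extends slightly past $\va_0$ while staying in $N$, i.e.\ there is $\varepsilon>0$ with $(1+\varepsilon)\va_0-\varepsilon\vb_0\in N$. Since $\pi$ is affine, $\pi\bigl((1+\varepsilon)\va_0-\varepsilon\vb_0\bigr)=(1+\varepsilon)\pi(\va_0)-\varepsilon\pi(\vb_0)=-\varepsilon\,\pi(\vb_0)<0$, contradicting $\pi\ge 0$ on $N$. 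Hence $F$ meets no relative-interior point of $N$, which completes the reduction.

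I expect no genuine obstacle here; the only care needed is with conventions. One should read ``face'' in Definition~\ref{def:cv} as a proper face (otherwise $N$ itself would be an admissible cover and the proposition would fail), and ``interior'' as relative interior when $\NEW(p)$ is not full-dimensional in $\RR^n$ (otherwise the claim is vacuously true). With these conventions the proposition is essentially immediate from the separating-hyperplane description of faces already recorded in Lemma~\ref{lem:pre}.
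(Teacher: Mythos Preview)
Your argument is correct. The paper itself does not give a proof of this proposition beyond the one-line remark that it ``is easy to get \ldots\ by the definition of convex cover polynomial,'' so there is no detailed proof to compare against; what you have written is precisely the natural way to flesh out that remark, using the separating-hyperplane description of faces from Lemma~\ref{lem:pre} to show that proper faces miss the relative interior, and then invoking the covering hypothesis. Your care with the conventions (proper faces, relative interior) is appropriate and matches how the paper uses these terms in Section~\ref{sec:con}.
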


The following theorem is a direct corollary of Theorem \ref{the:1}.

\begin{theorem}
  \label{cor:2}
  Suppose $p$ is a convex cover polynomial and $F_i (i=1,\dots u)$ are the faces satisfying the condition of Definition \ref{def:cv}.
  Let $p_i=\PROJ (p,F_i) (i=1,\dots u)$. Then $p$ is SOS if and only if $p_i$ is SOS for $i=1,\dots u$. 
\end{theorem}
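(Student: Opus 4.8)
The plan is to derive Theorem~\ref{cor:2} from Theorem~\ref{the:1} together with the disjointness of the faces $F_i$. The forward direction is immediate: if $p$ is SOS, then Theorem~\ref{the:1} applied to each face $F_i$ of $\NEW(p)$ gives that $p_i = \PROJ(p,F_i)$ is SOS for every $i$. So the real content is the converse.

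For the converse, suppose each $p_i$ is SOS, say $p_i = \sum_j q_{i,j}^2$. The natural candidate is $p = \sum_{i=1}^u p_i + r$ where $r$ collects the ``cross'' terms, and one wants to show $r = 0$. First I would observe that since the $F_i$ are pairwise disjoint faces of $\NEW(p)$ and $\SUPPORT(p) \subseteq \bigcup_i F_i$, we have $\SUPPORT(p) = \bigsqcup_i (\SUPPORT(p)\cap F_i)$, and $\SUPPORT(p_i) = \SUPPORT(p)\cap F_i$ by the definition of $\PROJ$; hence $p = \sum_{i=1}^u p_i$ exactly (using the last two items of Lemma~\ref{lem:pre}), so $r=0$ automatically and $p = \sum_i \sum_j q_{i,j}^2$ is visibly a sum of squares. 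So in fact the converse is essentially trivial once the support bookkeeping is set up: $\SUPPORT(p_i)\subseteq F_i$, the $F_i$ are disjoint, and $\sum_i p_i$ recovers $p$ because $\PROJ$ just partitions the monomials of $p$ according to which $F_i$ contains them.

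The one point that deserves care — and which I expect to be the only mild obstacle — is making sure the monomials of $p$ are partitioned, not merely covered, by the $F_i$, i.e.\ that a monomial $\xx^\va$ of $p$ lies in exactly one $F_i$. Since Definition~\ref{def:cv} requires the $F_i$ to be \emph{pairwise disjoint}, each $\va\in\SUPPORT(p)$ lies in a unique $F_i$, so $\PROJ(p,F_i)$ and $\PROJ(p,F_j)$ have disjoint supports for $i\neq j$ and their sum over all $i$ is $p$. One should also note that each $F_i$, being a face of a polytope, is itself convex, so the sub-polynomials $p_i$ are genuine polynomials whose SOS-ness is a well-posed (and smaller) problem; this is what makes the decomposition useful, though it is not needed for the logical equivalence.

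In summary, the proof is: ($\Rightarrow$) apply Theorem~\ref{the:1} to each $F_i$; ($\Leftarrow$) use pairwise disjointness of the $F_i$ and $\SUPPORT(p)\subseteq\bigcup_i F_i$ to conclude $p = \sum_{i=1}^u \PROJ(p,F_i) = \sum_{i=1}^u p_i$, which is a sum of squares when each $p_i$ is. I would keep the write-up to a few lines, citing Theorem~\ref{the:1} for one direction and Lemma~\ref{lem:pre} (the Minkowski-sum and disjoint-support items) for the reassembly in the other.
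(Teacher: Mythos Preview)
Your proposal is correct and matches the paper's approach: the paper simply states that Theorem~\ref{cor:2} is a direct corollary of Theorem~\ref{the:1}, and your argument spells out exactly that---Theorem~\ref{the:1} for the forward direction, and the observation that pairwise disjointness of the $F_i$ together with $\SUPPORT(p)\subseteq\bigcup_i F_i$ gives $p=\sum_i p_i$ for the converse. One small remark: the Minkowski-sum item of Lemma~\ref{lem:pre} is not actually needed for the reassembly; only the disjoint-support item (and the linearity of $\PROJ$) is used.
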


We use the following example to demonstrate the benefit of Theorem \ref{cor:2}.
\begin{example}
  \label{ex:2}
  $p=x_1^6+x_2^6+x_1^4-2x_1^2x_2^2+x_2^4.$
\end{example}

\begin{figure}[h]
  \begin{center}
    \psset{xunit=0.5,yunit=0.5}
    \begin{pspicture}(-2,-2)(7.3,7.3)
        \psaxes{->}(0,0)(-0.14,-0.14)(7.3,7.3)
        \psaxes[
        dy=1,
        Dy=1,
        labels=y,
        ticks=y,
        ](0,0)(-0.3,-0.3)(7.3,7.3)
      {\psset{fillstyle=ccslope,slopebegin=cyan!40,slopeend=yellow}
        \cnodeput(4,0){K}{\tiny {\strut\boldmath$x_1^4$}}
        \cnodeput(6,0){F}{\tiny {\strut\boldmath$x_1^6$}}
        \cnodeput(0,6){D}{\tiny {\strut\boldmath$x_2^6$}}
        \cnodeput(0,4){H}{\tiny {\strut\boldmath$x_2^4$}}
        \cnodeput(2,2){A}{\tiny {\strut\boldmath$x_1^2x_2^2$}}
      }

      \arc{K}{F}{120}
      \arc{F}{D}{650}
      \arc{D}{H}{780}
      \arc{H}{A}{490}
      \arc{A}{K}{600}

    \end{pspicture}

    \caption{Newton polytope of  Example \ref{ex:2}. \label{fig:con}}
  \end{center}
\end{figure}
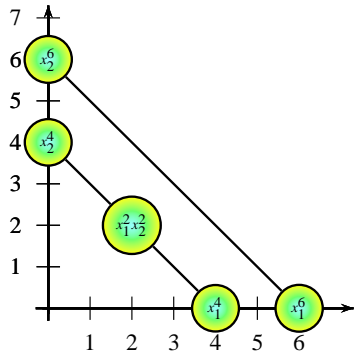

For Example \ref{ex:2},  $\SUPPORT(p)=\{(6,0),(0,6),(4,0),(2,2),(0,4)\}$.
Let $F_1=\{(6,0)\}, F_2=\{(0,6)\},$ $ F_3=\CONV(\{(4,0),(2,2),(0,4) \} )$ (shown in Fig. \ref{fig:con}) be three faces of $\NEW(p)$.
Because $F_1, F_2, F_3$ satisfy the condition of Definition \ref{def:cv}, $p$ is a convex cover polynomial. Let $p_i=
\PROJ(p,F_i)$ for $i=1,2,3$. Then, by Theorem \ref{cor:2}, proving $p$ is SOS is equivalent to proving $p_i$
is SOS for $i=1,2,3$. Therefore, the original problem is divided into three simpler sub-problems.
When using Newton polytope based method to prove $p$ is SOS, the size of $\GSUP$ is $7$ and the number of
constraints is $18$. However, for $p_1,p_2,p_3$, the corresponding data are $(1,1)$, $(1,1)$ and $(3,5)$, respectively.

Dividing the original
problem into simpler sub-problems can  improve  not only the efficiency but also the reliability of
the results.  As indicated in \cite{DGXZ13}, when the scale of problem is large, the numerical error of SDP solver may lead to a result which looks like ``reliable" by the output data while it is indeed unreliable.

\section{Split polynomial} 
\label{sec:mon}

Another new concept, {\em split polynomial}, is introduced in this section. Every convex cover polynomial is a split polynomial. The main results of this section are Theorem \ref{the:splitne} (analogue of Theorem \ref{the:1}) and Theorem \ref{the:split} (analogue of Theorem \ref{cor:2}), which allow one to block-diagonalize a wider class of SDPs.

\begin{definition}
  For a set $\GSUP$ of vectors and any $\va\in \GSUP+\GSUP$, define $\MAP_{\GSUP}(\va)=\{\vb\in \GSUP\mid \exists \vr \in \GSUP, \vb+\vr=\va  \}$.
\end{definition}

\begin{definition}\label{def:vertex}
Suppose ${\GSUP}$ satisfies $\SOSS( p,\GSUP)$ (see Definition \ref{def:soss}) for a polynomial $p$. Define $\VV(p,\GSUP)$ to be the set $\{\va\in {\GSUP}\mid \MAP_{\GSUP}(2\va)=\{\va \} \}$.
\end{definition}

\begin{definition}\label{def:mult}
Suppose ${\GSUP}$ satisfies $\SOSS(p,\GSUP)$ for a polynomial $p$. For any $\va\in {\GSUP}+{\GSUP}$, define
\[\RELAT(\va)=\left\{ \begin{array}{ll}
\{\frac{1}{2}\va\} & \mbox{ if } {\MAP}_{\GSUP}(\va) =\{\frac{1}{2}{\va} \},\\
\bigcup_{\vb,\vr\in \GSUP, \vb\neq\vr, \vb+\vr=\va}  \left( \RELAT(2\vb) \cup \RELAT(2\vr) \right ) & \mbox{ otherwise}.
\end{array}\right.\]
If $\va\notin {\GSUP}+{\GSUP}$, define $\RELAT(\va)=\emptyset$.
\end{definition}

Since $\RELAT(\va)$ is a subset of $\VV(p,\GSUP)$ and obviously $\VV(p,\GSUP)$ is a finite set, Definition \ref{def:mult} makes sense.

\begin{lemma}
  \label{lem:face}
  Suppose ${\GSUP}$ satisfies $\SOSS(p,\GSUP)$ for a polynomial $p$ and $F$ is a face of $\CONV(\GSUP+\GSUP)$. Let $T=\{\va\mid \va\in \VV(p,\GSUP),2\va\in F\},\GSUP_1=(\GSUP+\GSUP)\cap F$. Then $\RELAT(\va)\subseteq T$ for any $\va \in \GSUP_1$.
\end{lemma}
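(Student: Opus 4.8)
The plan is to prove this by an induction that follows the recursive structure of $\RELAT$ in Definition \ref{def:mult}, the only geometric ingredient being the supporting-hyperplane property of the face $F$; the statement is otherwise purely combinatorial and uses nothing about $p$ beyond the fact that $\VV(p,\GSUP)$ and $\RELAT$ are defined. I would first record a small normalisation: whenever $\MAP_{\GSUP}(\va)$ is a singleton $\{\vb\}$ it must equal $\{\tfrac12\va\}$, since any $\vr\in\GSUP$ with $\vb+\vr=\va$ lies in $\MAP_{\GSUP}(\va)=\{\vb\}$, forcing $\vr=\vb$ and $\va=2\vb$; thus the two clauses of Definition \ref{def:mult} are exactly the cases $\#\MAP_{\GSUP}(\va)=1$ and $\#\MAP_{\GSUP}(\va)\ge 2$, and in the second case there is a genuine decomposition $\va=\vb+\vr$ with $\vb\neq\vr$ available.

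The key lemma I would isolate is an \emph{averaging property of $F$}: if $\va\in\GSUP_1=(\GSUP+\GSUP)\cap F$ and $\va=\vb+\vr$ with $\vb,\vr\in\GSUP$, then $2\vb,2\vr\in\GSUP_1$. The proof is short: $2\vb$ and $2\vr$ belong to $\GSUP+\GSUP\subseteq\CONV(\GSUP+\GSUP)$, and $\va=\tfrac12(2\vb)+\tfrac12(2\vr)$; taking a supporting hyperplane of $F$, i.e. (Lemma \ref{lem:pre}, third item, applied inside the affine hull of $\GSUP+\GSUP$) an affine form $\pi$ with $\pi\ge 0$ on $\CONV(\GSUP+\GSUP)$ and vanishing on that polytope exactly along $F$, the identity $0=\pi(\va)=\tfrac12\pi(2\vb)+\tfrac12\pi(2\vr)$ forces $\pi(2\vb)=\pi(2\vr)=0$, hence $2\vb,2\vr\in F$ and so in $\GSUP_1$.

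With this in hand the induction is routine. In the base case $\MAP_{\GSUP}(\va)=\{\tfrac12\va\}$, Definition \ref{def:mult} gives $\RELAT(\va)=\{\tfrac12\va\}$; here $\tfrac12\va\in\GSUP$ and $\MAP_{\GSUP}(2\cdot\tfrac12\va)=\MAP_{\GSUP}(\va)=\{\tfrac12\va\}$, so $\tfrac12\va\in\VV(p,\GSUP)$, and since $\va\in F$ we have $2\cdot\tfrac12\va=\va\in F$, hence $\tfrac12\va\in T$. In the inductive case $\RELAT(\va)=\bigcup\big(\RELAT(2\vb)\cup\RELAT(2\vr)\big)$ over all decompositions $\va=\vb+\vr$ with $\vb\neq\vr$; by the averaging property every such $2\vb$ and $2\vr$ lies in $\GSUP_1$, so by the inductive hypothesis $\RELAT(2\vb)\subseteq T$ and $\RELAT(2\vr)\subseteq T$, and therefore $\RELAT(\va)\subseteq T$. (If $\va\notin\GSUP+\GSUP$ there is nothing to check, since then $\va\notin\GSUP_1$ and $\RELAT(\va)=\emptyset$.)

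The one subtle point — and where I expect the real work of a fully rigorous write-up to lie — is justifying ``induction along the recursion'', because the recursion of Definition \ref{def:mult} is not obviously well-founded: $2\vb$ and $2\vr$ need not precede $\va=\vb+\vr$ in any natural order. I would handle this by reading $\RELAT$ as the least fixed point of the monotone operator on subsets of the finite set $\VV(p,\GSUP)$ given by the right-hand side of Definition \ref{def:mult}: set $\RELAT_0(\va)=\emptyset$ for all $\va$ and let $\RELAT_{k+1}$ be one application of that operator with $\RELAT_k$ used in the recursive calls, then prove by ordinary induction on $k$ that $\RELAT_k(\va)\subseteq T$ for every $\va\in\GSUP_1$, the inductive step being exactly the computation of the previous paragraph. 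Passing to $\RELAT=\bigcup_k\RELAT_k$ finishes the proof; everything else is bookkeeping.
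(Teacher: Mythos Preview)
Your proof is correct and follows essentially the same route as the paper's. Both arguments rest on the single geometric fact you call the \emph{averaging property}: if $\va\in F$ and $\va=\vb+\vr$ with $\vb,\vr\in\GSUP$, then $2\vb,2\vr\in F$. The paper uses this implicitly (writing simply ``Because $\va=\vb_1+\vr_1$ and $\va\in F$, we have $2\vb_1\in F$'') and then, instead of inducting on fixed-point iterates, picks an arbitrary $\vb\in\RELAT(\va)$, unwinds the recursion into a finite chain $\va=\vb_1+\vr_1,\ 2\vb_1=\vb_2+\vr_2,\ \dots,\ 2\vb_{k-1}=\vb_k+\vr_k$ with $\vb_k=\vr_k=\vb$, and shows by induction on the chain that each $2\vb_i\in F$. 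Your least-fixed-point formulation and the paper's chain argument are two phrasings of the same induction; your version has the virtue of making the well-foundedness explicit, which the paper takes for granted.
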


\begin{proof}
  For any  $\vb \in \RELAT(\va)$, by the definition of $\RELAT(\va)$, there are $\vb_1,\cdots,\vb_k, \vr_1,\cdots, \vr_k\in \GSUP$ such that $\vb_i\neq \vr_i$ for $i=1,\cdots,k-1,\va=\vb_1+\vr_1,2\vb_1=\vb_2+\vr_2,\cdots,2\vb_{k-1}=\vb_k+\vr_k, \vb_k=\vr_k=\vb$
and $\RELAT(2\vb)=\{\vb\}$.

 We prove $2\vb_i \in F$ by induction. Because $\va=\vb_1+\vr_1$ and $\va\in F$, we have
$2\vb_1\in F$. Assume that  $2\vb_i\in F$ for  $i<m$. If $i=m$, since
$2\vb_{m-1}=\vb_m+\vr_m$ and $2\vb_{m-1}\in F$, we have $2\vb_m\in F$. Then
$2\vb=2\vb_k\in F$ and hence, $\vb \in T$.  \qed
\end{proof}

\begin{definition}\label{def:ex}
Suppose ${\GSUP}$ satisfies $ \SOSS(p,\GSUP)$ for a polynomial $p$ and $T\subseteq \VV(p,\GSUP)$. Define $\sigma(T)=\{\vr \mid \vr\in {\GSUP}, \RELAT(2\vr)\subseteq T\} $.
\end{definition}

\begin{lemma}  \label{lem:mult}
Suppose $p$ is SOS, say $p=\sum_{i=1}^sh_i^2$, and $\SUPPORT(h_i)\subseteq \GSUP$. Then for any $T\subseteq \VV(p,\GSUP)$ and any $\vb\in {\GSUP}+{\GSUP}$, if $\RELAT(\vb)\subseteq T$, then $\vb \not \in \SUPPORT((p-\sum_{i=1}^s\PROJ(h_i,\sigma(T))^2))$.
\end{lemma}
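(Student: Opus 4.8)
The plan is to compute, for the given $\vb\in\GSUP+\GSUP$, the coefficient of the monomial $\xx^{\vb}$ in $p-\sum_{i=1}^{s}\PROJ(h_i,\sigma(T))^{2}$ and show that it vanishes. Since $\SUPPORT(h_i)\subseteq\GSUP$ I would write $h_i=\sum_{\va\in\GSUP}c_{(i,\va)}\xx^{\va}$ with the convention $c_{(i,\va)}=0$ when $\va\notin\SUPPORT(h_i)$; then $\PROJ(h_i,\sigma(T))=\sum_{\va\in\sigma(T)}c_{(i,\va)}\xx^{\va}$. Expanding the squares, the coefficient of $\xx^{\vb}$ in $p=\sum_i h_i^{2}$ is $\sum_{i}\sum_{(\va,\vr)}c_{(i,\va)}c_{(i,\vr)}$, the inner sum being over ordered pairs $(\va,\vr)\in\GSUP\times\GSUP$ with $\va+\vr=\vb$, while the coefficient of $\xx^{\vb}$ in $\sum_i\PROJ(h_i,\sigma(T))^{2}$ is the same expression with the inner sum restricted to pairs in $\sigma(T)\times\sigma(T)$. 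Since $\sigma(T)\subseteq\GSUP$, the coefficient of $\xx^{\vb}$ in the difference therefore equals $\sum_{i}\sum_{(\va,\vr)}c_{(i,\va)}c_{(i,\vr)}$ where $(\va,\vr)$ now ranges over ordered pairs in $(\GSUP\times\GSUP)\setminus(\sigma(T)\times\sigma(T))$ with $\va+\vr=\vb$. So it suffices to establish the following \emph{combinatorial claim}: if $\RELAT(\vb)\subseteq T$ and $\vb=\va+\vr$ with $\va,\vr\in\GSUP$, then $\va\in\sigma(T)$ and $\vr\in\sigma(T)$; this makes the last index set empty and finishes the proof (using $\SUPPORT(0)=\emptyset$ should the difference be identically zero).

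To prove the claim I would read off from Definition \ref{def:ex} that, for $\va\in\GSUP$, the condition $\va\in\sigma(T)$ amounts to $\RELAT(2\va)\subseteq T$, and then split into two cases according to Definition \ref{def:mult}. If $\va=\vr$, then $2\va=\vb$, so $\RELAT(2\va)=\RELAT(\vb)\subseteq T$ and hence $\va=\vr\in\sigma(T)$. If $\va\neq\vr$, then $\va$ and $\vr$ are two distinct elements of $\MAP_{\GSUP}(\vb)$, so $\MAP_{\GSUP}(\vb)$ has at least two elements and in particular $\MAP_{\GSUP}(\vb)\neq\{\frac{1}{2}\vb\}$; hence $\RELAT(\vb)$ is given by the second (union) case of Definition \ref{def:mult}. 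The ordered pair $(\va,\vr)$ satisfies $\va\neq\vr$, $\va+\vr=\vb$ and $\va,\vr\in\GSUP$, so it is one of the indices of that union, whence $\RELAT(2\va)\cup\RELAT(2\vr)\subseteq\RELAT(\vb)\subseteq T$ and therefore $\va,\vr\in\sigma(T)$. This proves the claim.

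I do not anticipate a genuine obstacle: the argument is coefficient bookkeeping together with a single unfolding of the recursive definition of $\RELAT$. The one conceptual point worth flagging is that no induction on the depth of the $\RELAT$-recursion is needed --- the inclusion $\RELAT(2\va)\subseteq\RELAT(\vb)$ (or the equality $\RELAT(2\va)=\RELAT(\vb)$ when $\va=\vr$) already holds at the first level, and that is all the claim requires. The minor technical care points are the zero-extension of the $c_{(i,\va)}$ (so that the two coefficient expansions are literally the same sum restricted to different index sets) and treating the degenerate decomposition $\va=\vr$ separately, since the trivial splitting is excluded from the union in Definition \ref{def:mult}.
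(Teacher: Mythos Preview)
Your proof is correct and follows essentially the same approach as the paper's: both reduce to the combinatorial claim that every decomposition $\vb=\va+\vr$ with $\va,\vr\in\GSUP$ already has $\va,\vr\in\sigma(T)$, and then conclude that the coefficient of $\xx^{\vb}$ is unchanged when passing from $\sum_i h_i^{2}$ to $\sum_i\PROJ(h_i,\sigma(T))^{2}$. Your version is in fact more careful than the paper's in two places: you spell out the coefficient bookkeeping explicitly, and you treat the cases $\va=\vr$ and $\va\neq\vr$ separately (the paper just asserts $\RELAT(2\vr_1)\subseteq\RELAT(\vb)$ ``by the definition of $\RELAT$'' without noting that the equal-summands case needs the trivial equality rather than the union clause).
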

\begin{proof}
  For any $\vr_1, \vr_2\in {\GSUP} $ with  $ \vr_1+\vr_2=\vb$,  we have $ \RELAT(2\vr_1)\subseteq \RELAT(\vb)$ and  $\RELAT(2\vr_2)\subseteq \RELAT(\vb) $   by the definition of $\RELAT$. Since $\RELAT(\vb)\subseteq T$, we have  $\vr_1,\vr_2\in \sigma(T)$ by the definition of $\sigma(T)$. It is not difficult to see that the coefficient
  of the term $\xx^{\vb}$ in $\sum_{i=1}^s\PROJ(h_i,\sigma(T))^2$ equals that of the term $\xx^{\vb}$ in $\sum_{i=1}^sh_i^2$. Thus, $\xx^{\vb}$ does not appear in $p-\sum_{i=1}^s\PROJ(h_i,\sigma(T))^2$ since $p-\sum_{i=1}^sh_i^2=0$.\qed
\end{proof}


\begin{theorem}\label{the:splitne}
Assume $p=\sum c_{\va}\xx^{\va}$ is SOS, ${\GSUP}$ satisfies $\SOSS(p,\GSUP)$ and $T\subseteq\VV(p,\GSUP)$. If $\RELAT(\va+\vb)\subseteq T$ for any $\va, \vb\in \sigma(T)$, 
then $p_1=\sum_{\va\in \SUPPORT(p), \RELAT(\va)\subseteq T}c_{\va}\xx^{\va}$ is SOS.
\end{theorem}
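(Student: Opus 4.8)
The plan is to mimic the structure of the proof of Theorem \ref{the:1}: decompose each $h_i$ into a part supported on $\sigma(T)$ and a remainder, then show the cross terms and the pure-remainder terms contribute nothing to the monomials $\xx^\va$ with $\RELAT(\va)\subseteq T$. Concretely, write $p=\sum_{i=1}^s h_i^2$ with $\SUPPORT(h_i)\subseteq\GSUP$ (possible since $\SOSS(p,\GSUP)$ holds), and set $h_i'=\PROJ(h_i,\sigma(T))$, $h_i''=h_i-h_i'$. Then $\sum_i (h_i')^2$ is visibly SOS, so it suffices to prove that $p_1=\sum_i(h_i')^2$, i.e. that the two polynomials have the same set of terms $\xx^\va$ with $\RELAT(\va)\subseteq T$ and that $\PROJ(\sum_i(h_i')^2, \{\va: \RELAT(\va)\subseteq T\})$ recovers exactly $p_1$.

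The first half is essentially Lemma \ref{lem:mult}: for every $\vb$ with $\RELAT(\vb)\subseteq T$, the coefficient of $\xx^\vb$ in $\sum_i(h_i')^2$ equals that in $\sum_i h_i^2 = p$. So $p$ and $\sum_i(h_i')^2$ agree on all such monomials; in particular $p_1$ (which collects exactly those terms of $p$) is a ``sub-sum'' of $\sum_i(h_i')^2$. The second half — the step I expect to be the real obstacle — is the reverse inclusion: I must show $\sum_i(h_i')^2$ has \emph{no} term $\xx^\vb$ with $\RELAT(\vb)\not\subseteq T$. Here is where the hypothesis ``$\RELAT(\va+\vb)\subseteq T$ for all $\va,\vb\in\sigma(T)$'' is used. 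Any monomial appearing in $\sum_i(h_i')^2$ has exponent $\va_1+\va_2$ with $\va_1,\va_2\in\SUPPORT(h_i')\subseteq\sigma(T)$; the hypothesis then forces $\RELAT(\va_1+\va_2)\subseteq T$. Hence every term of $\sum_i(h_i')^2$ is among the monomials collected into $p_1$, and combined with the first half we get $\sum_i(h_i')^2 = p_1$ exactly, so $p_1$ is SOS.

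The delicate points to get right are: (i) checking that $\sigma(T)$ behaves well under the operations in Definition \ref{def:mult}, so that $\va_1,\va_2\in\sigma(T)$ genuinely triggers the hypothesis — this is really just unwinding Definition \ref{def:ex}; and (ii) making sure the "coefficient matching" in Lemma \ref{lem:mult} is applied in both directions, i.e. that no cancellation among the $(h_i')^2$ could create or destroy a term relative to $p$. For (ii) I would argue monomial-by-monomial: fix $\vb$ with $\RELAT(\vb)\subseteq T$; every pair $\vr_1,\vr_2\in\GSUP$ with $\vr_1+\vr_2=\vb$ lies in $\sigma(T)$ (shown in Lemma \ref{lem:mult}), so replacing $h_i$ by $h_i'$ deletes none of the pairs that feed the coefficient of $\xx^\vb$, and adds none either since $\SUPPORT(h_i')\subseteq\SUPPORT(h_i)$. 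That pins the coefficient. The main conceptual work is realizing that the extra hypothesis on $\sigma(T)$ is exactly what closes the support of $\sum_i(h_i')^2$ inside $\{\va:\RELAT(\va)\subseteq T\}$, turning the one-sided estimate of Lemma \ref{lem:mult} into the equality $p_1=\sum_i(h_i')^2$.
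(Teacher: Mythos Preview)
Your proposal is correct and follows essentially the same route as the paper: decompose each $h_i$ as $h_i'=\PROJ(h_i,\sigma(T))$ plus a remainder, use Lemma~\ref{lem:mult} to match coefficients of $p$ and $\sum_i(h_i')^2$ on all monomials $\xx^{\vb}$ with $\RELAT(\vb)\subseteq T$, and use the hypothesis on $\sigma(T)$ to show $\SUPPORT\bigl(\sum_i(h_i')^2\bigr)$ is contained in $\{\vb:\RELAT(\vb)\subseteq T\}$, whence $p_1=\sum_i(h_i')^2$. The paper phrases the conclusion via the two disjointness conditions $\SUPPORT(p_1)\cap\SUPPORT(p-\sum_i(h_i')^2)=\emptyset$ and $\SUPPORT(p-p_1)\cap\SUPPORT(\sum_i(h_i')^2)=\emptyset$, but this is exactly your two ``halves'' rewritten.
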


\begin{proof}
  Suppose $p=\sum_{i=1}^sh_i^2$ and 
  $p_1'=p-p_1$. Set $h_i'=\PROJ(h_i,\sigma(T))$ and $ h_i''=h_i-h_i'$, then
  $p=\sum_{i=1}^s(h_i')^2+2\sum_{i=1}^sh_i'h_i''+\sum_{i=1}^s(h_i'')^2$. By  Lemma \ref{lem:mult},
  $\vb  \not \in  \SUPPORT (p-\sum_{i=1}^s(h_i')^2)$
  for any $\vb \in \SUPPORT(p_1)$, {\it i.e.}, $\SUPPORT(p_1)\cap \SUPPORT (p-\sum_{i=1}^s(h_i')^2)=\emptyset$. Since $\RELAT(\va+\vb)\subseteq T$ for any $\va, \vb\in \sigma(T)$, by the definition of $\sigma(T)$, $\RELAT(\vb)\subseteq T$ for any $\vb \in \SUPPORT(\sum_{i=1}^s(h_i')^2)$.  Thus, $\SUPPORT(p_1')\cap \SUPPORT(\sum_{i=1}^s(h_i')^2)=\emptyset$.
  Summarizing the above, we have
  \begin{enumerate}
  \item $p_1+p'_1=\sum_{i=1}^s(h_i')^2+(p-\sum_{i=1}^s(h_i')^2)$,
  \item $\SUPPORT(p_1)\cap \SUPPORT(p-\sum_{i=1}^s(h_i')^2)=\emptyset$, and
  \item $\SUPPORT(p_1')\cap \SUPPORT(\sum_{i=1}^s(h_i')^2)=\emptyset$.
  \end{enumerate}
  Therefore, $p_1=\sum_{i=1}^s(h_i')^2$.
  \qed
\end{proof}


\begin{definition}[Split polynomial] \label{def:split}
Let ${\GSUP}$ be a finite set satisfying $\SOSS(p,\GSUP)$ for a polynomial $p$. If there exist some pairwise disjoint nonempty
subsets $T_i (i=1,\dots,u)$ of $\VV(p,\GSUP)$ such that
\begin{enumerate}
  \item   $\RELAT(\va+\vb)\subseteq T_i$ for any $\va, \vb\in \sigma(T_i)$(see Definition \ref{def:ex}) for any $i=1,\cdots,u$, and
  \item   for any $\va\in \SUPPORT(p)$, there exist exact one $T_i$ such that $\RELAT(\va)\subseteq T_i$,
\end{enumerate}
 then $p$ is said to be a {\em split polynomial} with respect to $T_1,\cdots,T_u$.
\end{definition}

\begin{theorem}\label{the:split}
Suppose $p=\sum c_{\va}\xx^{\va}$ is a split polynomial with respect to $T_1,\cdots,T_u$, then $p$ is SOS if and only if each $p_i= \sum_{\va\in \SUPPORT(p), \RELAT(\va)\subseteq T_i}c_{\va}\xx^{\va} $ is SOS for $i=1,\cdots,u$.
\end{theorem}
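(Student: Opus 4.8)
The plan is to treat the two implications separately: the ``if'' direction reduces to the identity $p=\sum_{i=1}^u p_i$, while the ``only if'' direction is an immediate application of Theorem \ref{the:splitne}, once for each index $i$.

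For the ``if'' direction, the first step is to establish that $p=\sum_{i=1}^u p_i$. Condition (2) of Definition \ref{def:split} says that for every $\va\in\SUPPORT(p)$ there is \emph{exactly one} index $i$ with $\RELAT(\va)\subseteq T_i$; hence the sets $S_i:=\{\va\in\SUPPORT(p)\mid\RELAT(\va)\subseteq T_i\}$ form a partition of $\SUPPORT(p)$, and collecting the monomials $c_{\va}\xx^{\va}$ of $p$ according to which $S_i$ contains $\va$ gives $p=\sum_{i=1}^u p_i$. (This also forces $\RELAT(\va)\neq\emptyset$ for each such $\va$ when $u\ge 2$, since $\emptyset$ is a subset of every $T_i$, but we do not even need that.) Consequently, if each $p_i$ is SOS, then $p$ is a finite sum of SOS polynomials and hence SOS.

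For the ``only if'' direction, suppose $p$ is SOS and fix $i\in\{1,\dots,u\}$. By the hypotheses, $\GSUP$ satisfies $\SOSS(p,\GSUP)$ and $T_i\subseteq\VV(p,\GSUP)$, and condition (1) of Definition \ref{def:split} is precisely the assumption ``$\RELAT(\va+\vb)\subseteq T_i$ for all $\va,\vb\in\sigma(T_i)$'' needed to invoke Theorem \ref{the:splitne} with $T=T_i$. That theorem then yields that $\sum_{\va\in\SUPPORT(p),\,\RELAT(\va)\subseteq T_i}c_{\va}\xx^{\va}$ is SOS, and this polynomial is exactly $p_i$. Since $i$ was arbitrary, every $p_i$ is SOS.

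The only step that needs genuine care is the combinatorial bookkeeping in the ``if'' direction, namely checking that condition (2) of Definition \ref{def:split} gives an honest partition of $\SUPPORT(p)$ (not merely a cover, and with no overlaps), so that $p$ really equals $\sum_i p_i$; all the substantive work, the support-separation argument via $\RELAT$ and $\sigma$ together with the splitting $h_i=h_i'+h_i''$, has already been done inside Theorem \ref{the:splitne}. I would also note explicitly that the same fixed $\GSUP$ from Definition \ref{def:split} is used in every one of the $u$ applications of Theorem \ref{the:splitne}, so no compatibility issue arises among the SOS certificates produced for the different $p_i$.
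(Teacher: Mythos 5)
Your proposal is correct and follows the paper's own argument exactly: necessity by applying Theorem \ref{the:splitne} with $T=T_i$ for each $i$, and sufficiency by observing that condition (2) of Definition \ref{def:split} partitions $\SUPPORT(p)$ so that $p=\sum_{i=1}^u p_i$ is a sum of SOS polynomials. Your added remarks on the partition versus cover distinction and the shared $\GSUP$ are sound elaborations of the same proof.
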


\begin{proof}
Necessity is a direct corollary of Theorem \ref{the:splitne}. For sufficiency, note that the second condition of Definition \ref{def:split} guarantees that $\SUPPORT(p_i)\cap \SUPPORT(p_j)=\emptyset$ for any $i\neq j$ and
$p=\sum_{i=1}^up_i$. 
 \qed
\end{proof}

Now, we give the relation between convex cover polynomial and split polynomial, which indicates that split polynomial is a wider class of polynomials.

\begin{theorem}
  \label{the:consp}
  If $p$ is a  convex cover polynomial, then $p$ is a  split polynomial. The converse is not true.
\end{theorem}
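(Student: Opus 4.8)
The plan is to prove the two assertions of Theorem~\ref{the:consp} separately. For the first, let $p$ be a convex cover polynomial, so there are pairwise disjoint faces $F_1,\dots,F_u$ of $\NEW(p)$ with $\SUPPORT(p)\subseteq\bigcup_{i=1}^u F_i$. Fix a set $\GSUP$ satisfying $\SOSS(p,\GSUP)$; the natural choice is $\GSUP=\GSUP^0=\left(\frac12\CONV(\PSUPE)\right)\cap\ZZ^n$, for which $\GSUP+\GSUP$ covers (a translate-free version of) $\CONV(\PSUPE)$ and in particular $\NEW(p)$. First I would define, for each $i$, the candidate subset $T_i=\{\va\in\VV(p,\GSUP)\mid 2\va\in F_i\}$, mimicking Lemma~\ref{lem:face}. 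I then need to check the two conditions of Definition~\ref{def:split}. Condition~(2) — that every $\va\in\SUPPORT(p)$ has $\RELAT(\va)\subseteq T_i$ for exactly one $i$ — should follow from Lemma~\ref{lem:face} applied to the face $F_i$ containing $\va$ (here one uses that $\va\in\SUPPORT(p)\subseteq\tfrac12\NEW(p)$ so $2\va\in F_i$, hence $\va\in(\GSUP+\GSUP)\cap F_i$ and $\RELAT(2\cdot\tfrac{2\va}{2})\subseteq T_i$), together with disjointness of the $F_i$ to get uniqueness (no $\RELAT(\va)$ can land in two different $T_i$ since $\RELAT(\va)\neq\emptyset$ for $\va\in\SUPPORT(p)$, while $T_i\cap T_j=\emptyset$ whenever $F_i\cap F_j=\emptyset$).

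For condition~(1) I would argue as follows: if $\va,\vb\in\sigma(T_i)$, then $\RELAT(2\va)\subseteq T_i$ and $\RELAT(2\vb)\subseteq T_i$, so every vertex appearing in these sets maps under doubling into $F_i$. One then shows $2\va,2\vb\in F_i$ (pick any $\vr\in\RELAT(2\va)$; unwinding Definition~\ref{def:mult} as in the proof of Lemma~\ref{lem:face}, the chain of midpoint relations forces $2\va$ into the same face as $2\vr$, which is $F_i$). Since $F_i$ is a face of the polytope $\CONV(\GSUP+\GSUP)$ and $\va+\vb$ is the midpoint of $2\va$ and $2\vb$, convexity of $F_i$ gives $\va+\vb\in\tfrac12 F_i$, i.e. $2(\va+\vb)\in F_i$; then Lemma~\ref{lem:face} applied to $F_i$ and the point $\va+\vb\in(\GSUP+\GSUP)\cap F_i$ yields $\RELAT(\va+\vb)\subseteq T_i$. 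The pairwise disjointness and nonemptiness of the $T_i$ (nonempty because each $F_i$, being a face that contains a support point in its relative interior-ish sense, contains a point $2\va$ with $\va$ a "vertex" of $\GSUP$) complete the verification, so $p$ is split with respect to $T_1,\dots,T_u$.

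The second assertion, that the converse fails, I would settle by exhibiting a single explicit polynomial that is split but not convex cover — the cleanest witnesses are polynomials whose support has a point in the interior of the Newton polytope (ruled out for convex cover polynomials by Proposition~\ref{lem:int}) but which still block-diagonalize via the monomial-relation analysis. A small univariate-plus example such as a perturbation of $x^2$ by a lower-order structure, or a two-variable polynomial like $1+x_1^2x_2^2+x_1^4+x_2^4$-type example where the central monomial lies strictly inside $\NEW(p)$ yet $\RELAT$ still partitions the support, should work; I would just display it, compute $\GSUP^0$, $\VV(p,\GSUP^0)$, the relevant $\RELAT$ values and the $T_i$, and note that the support meets the interior of $\NEW(p)$.

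The main obstacle is the bookkeeping in condition~(1): carefully tracking that $\sigma(T_i)$ consists exactly of those $\vr\in\GSUP$ whose "doubling footprint" $\RELAT(2\vr)$ stays inside $T_i$, hence inside the face $F_i$, and then pushing this through the midpoint/convexity argument to control $\RELAT(\va+\vb)$. The subtlety is that $\RELAT$ is defined recursively through \emph{all} decompositions $\vb+\vr=\va$, so I must be sure the face-membership argument of Lemma~\ref{lem:face} is robust enough to cover the recursion — which it is, since that lemma already handles arbitrary chains of midpoint relations. A secondary point to get right is the choice of $\GSUP$: I should verify that $\GSUP^0$ genuinely satisfies $\SOSS(p,\GSUP^0)$ (it does, by \eqref{eq:subg}) and that $\CONV(\GSUP^0+\GSUP^0)$ has the $F_i$ as faces, or more precisely that $2F_i$ is a face of $\CONV(\GSUP^0+\GSUP^0)$; this is where the geometry of Newton polytopes feeds into the monomial-relation machinery.
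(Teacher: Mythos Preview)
Your approach is essentially identical to the paper's: define $T_i=\{\va\in\VV(p,\GSUP)\mid 2\va\in F_i\}$, argue that $\sigma(T_i)$ consists precisely of those $\vr\in\GSUP$ with $2\vr\in F_i$, and then combine convexity of faces with Lemma~\ref{lem:face} to verify both conditions of Definition~\ref{def:split}. The paper proceeds the same way, though it is more explicit in isolating the key claim $\sigma(T_j)=\{\vr\in\GSUP\mid 2\vr\in F_j\}$ and proving it as a two-sided set equality (one inclusion via a supporting linear functional, the other via the face property that $\va_1+\va_2\in F_j$ with $\va_1,\va_2\in\GSUP$ forces $2\va_1,2\va_2\in F_j$).

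Two scaling slips need correcting. First, $\SUPPORT(p)\subseteq\NEW(p)$, not $\tfrac12\NEW(p)$; a point $\va\in\SUPPORT(p)$ already lies in some $F_i$, and what you need is just $\va\in(\GSUP+\GSUP)\cap F_i$ so that Lemma~\ref{lem:face} applies directly to give $\RELAT(\va)\subseteq T_i$. Second, if $2\va,2\vb\in F_i$ then convexity gives $\va+\vb\in F_i$ (not $\tfrac12 F_i$), and that is exactly the hypothesis needed for Lemma~\ref{lem:face}; the statement ``$2(\va+\vb)\in F_i$'' is neither what follows nor what is required.

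For the converse, the paper uses the Motzkin polynomial $x_1^4x_2^2+x_1^2x_2^4-3x_1^2x_2^2+1$ of Example~\ref{ex:motzkin}: the monomial $x_1^2x_2^2$ lies in the interior of $\NEW(p)$, so Proposition~\ref{lem:int} rules out the convex cover property, while the $\RELAT$ computation already done in that example shows it is split. Your candidate $1+x_1^2x_2^2+x_1^4+x_2^4$ would in fact also work (take $T_1=\{(0,0)\}$ and $T_2=\{(2,0),(0,2)\}$), but you should carry out the verification rather than leave it at ``should work''.
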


\begin{proof}

  If $p$ is a convex cover polynomial then there exist pairwise disjoint
  faces $F_i (i=1,\dots, u)$  of $\NEW(p)$ such that
  $\SUPPORT(p)\subseteq \cup_{i=1}^uF_i $.  Suppose  $\CONV({\GSUP}+\GSUP)=\NEW(p)$  and  $\GSUP$  satisfies $\SOSS(p,\GSUP)$.
  Let $T_i=\{\va \in \VV(p,\GSUP) \mid 2\va \in F_i \},i=1,\cdots,u$.  We prove that $p$ is a split polynomial with respect to $T_1,\cdots,T_u$.

  We claim that $\sigma(T_j)=\{\vr\in \GSUP \mid 2\vr \in F_j \}$ for $j=1,\cdots, u$. If there exist $\vr_0 \in \sigma(T_j), 2\vr_0 \not \in F_j$, as $F_j$ is a face of $\NEW(p)$, there exist a linear function  $\pi$ such that $\pi(2\vr_0) > \pi(\va)$ for any $\va \in F_j$. By the Definition of $\RELAT(2\vr_0)$, there
  exists $\vb_0 \in \RELAT(2\vr_0)\subseteq T_j$ such that $\pi(2\vb_0)\ge \pi(2\vr_0)$.
  This contradicts with $2\vb_0 \in F_j$. Thus, $\sigma(T_j)\subseteq \{\vr\in \GSUP \mid 2\vr \in F_j \}$.

  We then prove that $ \{\vr\in \GSUP \mid 2\vr \in F_j \}\subseteq  \sigma(T_j)$. Assume that there exists $\vr_0\in \GSUP$ with $ 2\vr_0 \in F_j$
  such that $\vr_0 \not \in \sigma(T_j)$. Then there exists $\vb_0 \in \RELAT(2\vr_0)$ such that $2\vb_0 \not \in F_j$.
  Because $F_j$ is a face of $\NEW(p)$, it is not difficult to see that if
  $\va_1+\va_2\in F_j$ where $\va_1,\va_2\in \GSUP$, then   $2\va_1\in F_j, 2\va_2\in F_j$.
  Therefore,  $2\vb \in F_j$ for any $\vb \in \RELAT(2\vr_0)$,  which  contradicts with $2\vb_0\not \in F_j$.

  Now we have $\sigma(T_j)=\{\vr\in \GSUP \mid 2\vr \in F_j \}$.
  By Lemma \ref{lem:face}, $\RELAT(\va+\vb)\subseteq T_j$ for any $\va, \vb\in \sigma(T_j)$.  Since  $\SUPPORT(p)\subseteq \cup_{i=1}^uF_i $ and $F_i$ are pairwise disjoint,
  there exists  exact  one $T_i$  such that  $\RELAT(\va)\subseteq T_i$ for any $\va\in \SUPPORT(p)$. As a result, $p$ is a split polynomial with respect to $T_1,\cdots,T_u$.

  Note that the Motzkin polynomial in  Example \ref{ex:motzkin}  is a  split polynomial but not a convex cover polynomial since $x_1^2x_2^2$ lies in the interior of
  $\NEW(p)$ (see Figure \ref{fig:motzkin}).  That means  the converse is not true. \qed
\end{proof}

\begin{remark}
  One may wonder under what condition a split polynomial is a convex cover polynomial.
  A reasonable conjecture may be as this:

  {\em Let ${\GSUP}$ be a finite set satisfying $\SOSS(p,\GSUP)$ with $\CONV(\GSUP+\GSUP)=\NEW(p)$ for a polynomial $p$. If $\VV(p,\GSUP)$
  contains only vertices of   $\CONV(\GSUP)$, then
   $p$ is a split polynomial if and  only if   $p$ is a convex convex polynomial.}

 Unfortunately, the conjecture is not true. For example,
 let $$p=x_1^4x_2^2x_3^2+x_1^2x_2^4x_3^2-2x_1^2x_2^2x_3^2 +x_3^2+x_1^2x_2^2+x_1^2x_2^2x_3^4,$$
 then $$\begin{array}{rl}
   \GSUP&=\{(2,1,1),(1,2,1),(1,1,1),(0,0,1),(1,1,0),(1,1,2)\},\\
   \VV(p,\GSUP)&=\{(2,1,1),(1,2,1),(0,0,1),(1,1,0),(1,1,2)\}.
 \end{array}
 $$
 Obviously, $\VV(p,\GSUP)$ contains only vertices of $\CONV(\GSUP)$. Set $T_1=\{(2,1,1),(1,2,1),(0,0,1)\},$ $ T_2=\{(1,1,0),(1,1,2)\} $, then it is easy to check $p$ is a split polynomial with respect to $T_1,T_2$. But $p$ is not a convex cover polynomial by Proposition \ref{lem:int} because $x_1^2x_2^2x_3^2$ lies in the interior of $\NEW(p)$.
 
 The example indicates that the relation between split polynomial and convex cover polynomial may be complicated. We do not find a good sufficient condition for a split polynomial to be a convex cover polynomial.
\end{remark}

\section{Algorithm} 
\label{sec:imp}

Existing SDP based SOS solvers consists of the following two main steps: computing a set $\GSUP$ which satisfies $\SOSS(p,\GSUP)$ for a given $p$; solving the feasibility problem of (\ref{eq:SOS1}) related to $\GSUP$ by SDP solvers.
In this section, we give a new algorithm (Algorithm \ref{alg:main}) for SOS decomposition. The algorithm employs the following strategies. First, we give a different technique for computing an initial set $\GSUP$ which satisfies $\SOSS(p,\GSUP)$ for a given $p$. Second, we check one necessary condition (Lemma \ref{lemcor:4}) to refute quickly some non-SOS polynomials. Third,
if the input polynomial is detected to be a split polynomial, we reduce the problem into several smaller sub-problems based on Theorem \ref{the:split}. This section is dedicated to describe the strategies in detail and the performance of the algorithm is reported in the next section.

We first describe the new technique for computing an initial set $\GSUP$.
The following lemma is a direct corollary of the result in \cite{renz78} (see also Eq. (\ref{eq:sup}) in Section \ref{sec:pro}).
\begin{lemma}
  Suppose $p$ is a polynomial and $\vr$ is a given vector. Let $c=\max_{\va\in \frac{1}{2}\PSUPE} \vr^T\va$. For any $\GSUP$ which satisfies $\SOSS(p,\GSUP)$, after deleting every $\vb$ in $\GSUP$ such that $\vr^T\vb>c$, $\SOSS(p,\GSUP)$ still holds.
  \label{lem:2}
\end{lemma}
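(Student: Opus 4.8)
The plan is to reduce Lemma \ref{lem:2} directly to the support constraint (\ref{eq:sup}) recalled from \cite{renz78}. Recall that $\SOSS(p,\GSUP)$ means: if $p$ is SOS then there exist $q_1,\dots,q_s$ with $p=\sum_i q_i^2$ and $\SUPPORT(q_i)\subseteq\GSUP$. So the content of the lemma is that we may further assume $\SUPPORT(q_i)\subseteq\GSUP'$, where $\GSUP'$ is obtained from $\GSUP$ by deleting every $\vb$ with $\vr^T\vb>c$. Since the $q_i$ we start from already satisfy $\SUPPORT(q_i)\subseteq\GSUP$, it suffices to show that no $\vb\in\SUPPORT(q_i)$ can have $\vr^T\vb>c$; then automatically $\SUPPORT(q_i)\subseteq\GSUP\cap\{\vb:\vr^T\vb\le c\}=\GSUP'$, and the same $q_i$ witness $\SOSS(p,\GSUP')$.

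To prove that claim, I would argue by contradiction using the linear functional $\vb\mapsto\vr^T\vb$. Suppose some $\vb$ with $\vr^T\vb>c$ lies in $\bigcup_i\SUPPORT(q_i)$; among all such $\vb$ pick one maximizing $\vr^T\vb$, say $\vb^\ast$, attained in $\SUPPORT(q_j)$ for some $j$. Then $2\vb^\ast\in\SUPPORT(q_j^2)$, and by maximality $2\vb^\ast$ cannot be cancelled by any cross terms or by $q_i^2$ for $i\ne j$: any monomial $\xx^{\va_1+\va_2}$ appearing in some $q_i^2$ or $q_iq_k$ with $\va_1,\va_2\in\bigcup_i\SUPPORT(q_i)$ and $\va_1+\va_2=2\vb^\ast$ would force $\vr^T\va_1=\vr^T\va_2=\vr^T\vb^\ast$ (since both are $\le\vr^T\vb^\ast$ by maximality, or one exceeds it contradicting maximality), but then $\va_1,\va_2$ both achieve the maximum, so in particular $2\vb^\ast$ appears with a nonzero coefficient as a sum of squares of the leading parts — more precisely, the coefficient of $\xx^{2\vb^\ast}$ in $\sum_i q_i^2$ equals the coefficient of $\xx^{2\vb^\ast}$ in $\sum_i (\text{leading part of }q_i)^2$, which is a sum of squares of reals and hence strictly positive because $\vb^\ast$ really occurs. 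Therefore $2\vb^\ast\in\SUPPORT(p)$. But then $\vb^\ast\in\frac12\SUPPORT(p)$, and since $2\vb^\ast$ has even coordinates, $2\vb^\ast\in\PSUPE$, so $\vb^\ast\in\frac12\PSUPE$, giving $\vr^T\vb^\ast\le\max_{\va\in\frac12\PSUPE}\vr^T\va=c$, contradicting $\vr^T\vb^\ast>c$.

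Actually it is cleaner to invoke (\ref{eq:sup}) as a black box rather than redo the leading-term cancellation argument: (\ref{eq:sup}) already tells us that $\bigcup_i\SUPPORT(q_i)\subseteq\frac12\CONV(\PSUPE)$. So for any $\vb\in\SUPPORT(q_i)$ we have $\vb\in\frac12\CONV(\PSUPE)$, hence $2\vb\in\CONV(\PSUPE)$, and since $\vr^T$ is linear it attains its maximum over the polytope $\CONV(\PSUPE)$ at a vertex, i.e. at some point of $\PSUPE$; thus $\vr^T(2\vb)\le\max_{\va\in\PSUPE}\vr^T\va=2c$, giving $\vr^T\vb\le c$. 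This shows $\SUPPORT(q_i)\subseteq\GSUP'$ and finishes the proof. I would present this second version as the main line and perhaps remark that it is essentially the standard Newton-polytope pruning specialized to a single supporting direction $\vr$.

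I do not expect a genuine obstacle here: the only mildly delicate point is making sure the maximum of the linear functional $\vr^T$ over $\CONV(\PSUPE)$ is the same as its maximum over $\PSUPE$ (a standard fact about linear functions on polytopes, since $\CONV(\PSUPE)$ is the convex hull of the finite set $\PSUPE$ and a linear function is maximized at an extreme point, which must be one of the generating points). Everything else is bookkeeping: unwinding Definition \ref{def:soss}, noting that deleting points with $\vr^T\vb>c$ from $\GSUP$ does not remove any element of $\bigcup_i\SUPPORT(q_i)$, and concluding that the same decomposition $p=\sum_iq_i^2$ witnesses $\SOSS(p,\GSUP')$.
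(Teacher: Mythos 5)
Your main line---invoking Eq.~(\ref{eq:sup}) as a black box, noting that the linear functional $\vb\mapsto\vr^T\vb$ attains its maximum over $\frac12\CONV(\PSUPE)$ at a point of $\frac12\PSUPE$, and concluding that the same $q_i$ witness the relation for the pruned set---is exactly the paper's argument, which presents the lemma as a direct corollary of \cite{renz78} (Eq.~(\ref{eq:sup})) without further proof. Your discarded first sketch is the only shaky part: maximizing a single linear functional does not rule out $2\vb^\ast=\va_1+\va_2$ with $\va_1\neq\va_2$ on the same level set, so the cross terms need not vanish and the coefficient of $\xx^{2\vb^\ast}$ need not be positive (one must take $\vb^\ast$ to be a vertex of $\CONV(\bigcup_i\SUPPORT(q_i))$, as Reznick does); but since you set that aside in favor of the black-box route, the proof stands.
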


By Lemma \ref{lem:2}, it is easy to give a method for computing an initial set $\GSUP$ which satisfies $\SOSS(p,\GSUP)$ for a given $p$. That is, first choose a coarse set $\GSUP$
which satisfies $\SOSS(p,\GSUP)$, {\it e.g.}, the set defined be Eq. (\ref{eq:subg}); 
then prune the superfluous elements in $\GSUP$ by choosing {\em randomly} $\vr$. This is indeed a common method in existing work \cite{PPP02,lofberg2004yalmip,seiler2013sosopt}.

We employ a different strategy to construct an initial $\GSUP$ satisfying $\SOSS(p,\GSUP)$.
The procedure is as follows.
For a given polynomial $p$, firstly, we compute the set $\frac{1}{2}\PSUPE$ (recall that ${\PSUPE}={\PSUP}\cap(2\ZZ^n)$ where $\PSUP$ is the support of $p$) and an over approximation set $\GSUP$ of integer points in
$\CONV(\frac{1}{2}\PSUPE)$. Secondly, let $B$ be the matrix whose columns are all the vectors  of $\frac{1}{2}\PSUPE$.
We choose one by one the hyperplanes whose normal directions are the eigenvectors of $BB^T$ to delete superfluous lattice points in $\GSUP$ by Lemma \ref{lem:2}.

\begin{definition}
\label{def:apca}
We denote by $\algpca(p)$ the above procedure to compute an initial $\GSUP$ satisfying $\SOSS(p,\GSUP)$ for a given polynomial $p$.
\end{definition}

We cannot prove the above strategy is better in general than the random one. However, inspired by principal component analysis (PCA), we believe in many cases the shape of  $\CONV(\frac{1}{2}\PSUPE)$ depends on eigenvectors of $BB^T$. On a group of randomly generated examples (see Example \ref{ex:sqr}), we show that the size of $\GSUP$ obtained by using random hyperplanes to delete superfluous lattice points are $10\%$ greater than that of the output of our algorithm $\algpca$ (see Figure \ref{fig:pcacmp}).

\begin{example}
\label{ex:sqr}
$SQR(k,n,d,t)=g_1^2+\dots+g_k^2$ where  $\deg(g_i)=d$, $\#(\SUPPORT(g_i))=t$, $\#({\rm var}(g_i))=n$. 
\end{example}

\begin{figure}[ht]
	\centering
	\includegraphics[width=0.5\textwidth]{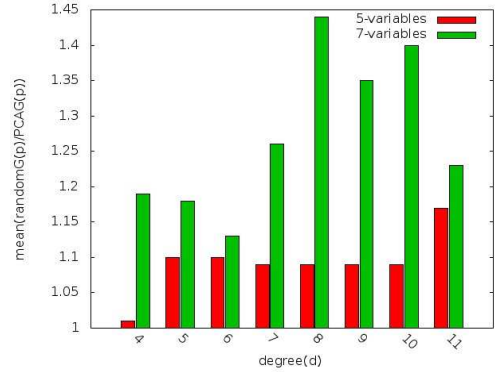}
	\caption{ Mean ratio of $\#(\GSUP)$  between  random algorithm and \algpca(p)   on every random group $SQR(k,n,d,t).$ \label{fig:pcacmp} Red color corresponds to $k=4,n=5,t=3$ and green color corresponds to $k=5, n=7,t=4$. For any given $(k,n,d,t)$,  we generate $10$ polynomials randomly.
}
\end{figure}

\begin{lemma}\cite{kim05,seiler2013simplification}
  For a polynomial $p$ and a set $\GSUP$ which satisfy $\SOSS(p,\GSUP)$, after deleting every element $\va$ in $\GSUP$ which satisfies that $2\va\not \in \PSUPE$ and $\MAP_{\GSUP}(2\va)=\{\va\}$, the relation $\SOSS(p,\GSUP)$ still holds.
  \label{lem:reduce}
\end{lemma}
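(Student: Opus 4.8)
The plan is to reduce the claim to the deletion of a single flagged element and then iterate. Fix $\va\in\GSUP$ with $2\va\notin\PSUPE$ and $\MAP_{\GSUP}(2\va)=\{\va\}$, put $\GSUP'=\GSUP\setminus\{\va\}$, and show that $\SOSS(p,\GSUP')$ holds. Assuming $p$ is SOS, the relation $\SOSS(p,\GSUP)$ (Definition \ref{def:soss}) supplies polynomials $q_1,\dots,q_s$ with $p=\sum_{i=1}^s q_i^2$ and $\SUPPORT(q_i)\subseteq\GSUP$. Separate the $\xx^{\va}$-term of each summand by writing $q_i=c_i\xx^{\va}+r_i$ with $c_i\in\RR$ and $\SUPPORT(r_i)\subseteq\GSUP'$.

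The heart of the argument is a coefficient comparison for the monomial $\xx^{2\va}$ in the identity $p=\sum_{i=1}^s q_i^2$. On the left the coefficient is $0$: since $\va\in\ZZ^n$ we have $2\va\in 2\ZZ^n$, so $2\va\notin\PSUPE=\PSUP\cap 2\ZZ^n$ forces $2\va\notin\PSUP=\SUPPORT(p)$. On the right, expanding $q_i^2=c_i^2\xx^{2\va}+2c_i\xx^{\va}r_i+r_i^2$, a monomial $\xx^{2\va}$ in $\sum_i q_i^2$ can only come from a product $\xx^{\vb}\xx^{\vr}$ with $\vb,\vr\in\GSUP$ and $\vb+\vr=2\va$; by hypothesis $\MAP_{\GSUP}(2\va)=\{\va\}$, the unique such pair is $\vb=\vr=\va$. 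Consequently the cross terms $2c_i\xx^{\va}r_i$ (with exponents in $\va+\SUPPORT(r_i)\subseteq\va+\GSUP'$, none equal to $2\va$) and the terms $r_i^2$ (with exponents in $\GSUP'+\GSUP'$) contribute nothing, so the coefficient of $\xx^{2\va}$ in $\sum_i q_i^2$ is exactly $\sum_{i=1}^s c_i^2$. Hence $\sum_{i=1}^s c_i^2=0$, which forces $c_i=0$ for all $i$; thus $q_i=r_i$ with $\SUPPORT(q_i)\subseteq\GSUP'$, establishing $\SOSS(p,\GSUP')$.

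It remains to see that the deletion condition propagates, so that all flagged elements can be removed. If $\va_1\neq\va_2$ both satisfy the hypothesis with respect to $\GSUP$, then after deleting $\va_1$ we still have $2\va_2\notin\PSUPE$, and $\MAP_{\GSUP\setminus\{\va_1\}}(2\va_2)\subseteq\MAP_{\GSUP}(2\va_2)=\{\va_2\}$ while $\va_2$ still belongs to $\GSUP\setminus\{\va_1\}$ and $\va_2+\va_2=2\va_2$, so the singleton condition persists. One may therefore remove the flagged elements one at a time, applying the single-element step at each stage.

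I do not expect a serious obstacle here; the only point needing care is the bookkeeping in the coefficient comparison, namely checking that the cross terms $c_i\xx^{\va}r_i$ and the squares $r_i^2$ cannot regenerate $\xx^{2\va}$. This is precisely where both hypotheses enter: $2\va\notin\PSUPE$ makes the left-hand coefficient vanish, and $\MAP_{\GSUP}(2\va)=\{\va\}$ rules out every contribution on the right except the $\sum_i c_i^2$ coming from the squared $\xx^{\va}$-terms. A secondary, minor point is to confirm that ``delete every element'' is order-independent, which the propagation remark settles.
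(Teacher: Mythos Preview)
Your argument is correct and is the standard one: isolate the coefficient of $\xx^{2\va}$, use $2\va\notin\PSUPE$ to see it vanishes on the left, and use $\MAP_{\GSUP}(2\va)=\{\va\}$ to reduce the right-hand contribution to $\sum_i c_i^2$, forcing all $c_i=0$. The iteration remark is also handled correctly.

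There is nothing to compare against in the paper itself: Lemma~\ref{lem:reduce} is stated with a citation to \cite{kim05,seiler2013simplification} and no proof is given. Your write-up matches the argument in those references.
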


\begin{definition}
\label{def:aea}
 We denote by $\algexa(p)$ the procedure which deletes superfluous elements of the output of $\algpca(p)$ based on Lemma \ref{lem:reduce}.
\end{definition}

The following lemma is a simple but very useful necessary condition which can detect non-SOS polynomials efficiently in many cases.
\begin{lemma}
  \label{lemcor:4}

  Suppose ${\GSUP}$ satisfies $ \SOSS(p,\GSUP)$ for a polynomial $p$. If $p$ is SOS, then $\va \in \GSUP+\GSUP$ for any $\va \in \SUPPORT(p)$.

\end{lemma}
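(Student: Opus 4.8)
The plan is to prove the contrapositive-flavoured statement directly: assuming $p$ is SOS, write $p=\sum_{i=1}^s h_i^2$ with $\SUPPORT(h_i)\subseteq\GSUP$ (this is exactly what $\SOSS(p,\GSUP)$ gives us once we know $p$ is SOS), and then track where a monomial of $\SUPPORT(p)$ can come from. First I would expand $\sum_i h_i^2=\sum_i\sum_{\vb,\vr\in\SUPPORT(h_i)}c_{(i,\vb)}c_{(i,\vr)}\xx^{\vb+\vr}$, so every exponent appearing on the right-hand side lies in $\SUPPORT(h_i)+\SUPPORT(h_i)\subseteq\GSUP+\GSUP$. Hence $\SUPPORT\!\left(\sum_i h_i^2\right)\subseteq\GSUP+\GSUP$.

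The key step is then the bookkeeping observation that $p=\sum_i h_i^2$ forces $\SUPPORT(p)\subseteq\SUPPORT\!\left(\sum_i h_i^2\right)$ — but this requires a small care, because cancellation could in principle make the support of the sum \emph{smaller} than the union of individual supports. However, $p$ and $\sum_i h_i^2$ are literally equal as polynomials, so their supports are equal: $\SUPPORT(p)=\SUPPORT\!\left(\sum_{i=1}^s h_i^2\right)\subseteq\GSUP+\GSUP$. Therefore every $\va\in\SUPPORT(p)$ satisfies $\va\in\GSUP+\GSUP$, which is the claim. I would phrase this cleanly using the last item of Lemma~\ref{lem:pre} ($\SUPPORT(fg)\subseteq T_1+T_2$) applied with $f=g=h_i$ to get $\SUPPORT(h_i^2)\subseteq\GSUP+\GSUP$, then note $\SUPPORT\!\left(\sum_i h_i^2\right)\subseteq\bigcup_i\SUPPORT(h_i^2)\subseteq\GSUP+\GSUP$, and finally invoke $p=\sum_i h_i^2$.

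There is essentially no hard part here; the statement is almost immediate from the definition of $\SOSS$ together with the Minkowski-sum bound on the support of a product. The one point worth stating explicitly, so the reader is not worried about cancellation, is that the inclusion $\SUPPORT(p)\subseteq\GSUP+\GSUP$ only needs the \emph{upper} bound $\SUPPORT\!\left(\sum_i h_i^2\right)\subseteq\GSUP+\GSUP$ and the equality $p=\sum_i h_i^2$; we never need that nothing cancels. So the proof is two or three lines: unpack $\SOSS$, bound supports of the squares by $\GSUP+\GSUP$ via Lemma~\ref{lem:pre}, take the union, and use $p=\sum_i h_i^2$. This lemma is the natural necessary condition that lets the algorithm in Section~\ref{sec:imp} refute a candidate $p$ whenever some monomial of $p$ fails to lie in $\GSUP+\GSUP$, so stating it crisply is all that is needed.
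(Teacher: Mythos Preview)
Your proof is correct and follows essentially the same approach as the paper: unpack $\SOSS(p,\GSUP)$ to write $p=\sum_i h_i^2$ with $\SUPPORT(h_i)\subseteq\GSUP$, then observe that every exponent in $\sum_i h_i^2$ lies in $\GSUP+\GSUP$. The paper phrases the last step as ``for every monomial $\xx^{\va}$ of $p$ there are $q_i,\xx^{\vb},\xx^{\vr}$ with $\xx^{\va}=\xx^{\vb}\xx^{\vr}$,'' while you route it through the support-of-product item in Lemma~\ref{lem:pre}; these are the same argument.
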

\begin{proof}
  If $p$ is SOS, since $p$ and $ \GSUP$ satisfy  relation  $SOSS(p,\GSUP)$, there are $q_1,\ldots,q_s$ such that $p=\sum_{i=1}^sq_i^2$ and $\SUPPORT(q_i)\subseteq \GSUP$. Hence, for every monomial $\xx^{\va}$ of $p$ there are $q_i$, $\xx^{\vb}, \xx^{\vr}$ such that $\xx^{\vb},\xx^{\vr}$ are monomials of $q_i$ and $\xx^{\va}=\xx^{\vb}\xx^{\vr}$. Therefore, $\va\in \GSUP+\GSUP$ for any $\va\in \SUPPORT(p)$. \qed

\end{proof}
\begin{example}\cite{choilam77}
  Let $q(x,y,z)=1+x^2y^2+y^2z^2+z^2x^2-4xyz.$ It is easy to know that
  $$\begin{array}{l}
  \frac{1}{2}\PSUPE=\{(0,0,0),(1,1,0),(1,0,1),(0,1,1) \},\\
  \GSUPZ=\{(0,0,0), (1,0,0), (0,1,0), (0,0,1), (1,1,0), (1,0,1), (0,1,1) \},
  \end{array}
  $$
  and $\SOSS(q,\GSUPZ)$ holds. By Lemma \ref{lem:reduce}, after deleting $(1,0,0), (0,1,0), (0,0,1)$  from $\GSUPZ$, we have   $\GSUP=\algexa$$(q)=\{(0,0,0),(1,1,0),(1,0,1),(0,1,1)\}$ and $\SOSS(q,\GSUP)$ holds. 
  Since $(1,1,1)\notin \GSUP+\GSUP$, by Lemma \ref{lemcor:4}, $p$ is not SOS.
\end{example}

For an input polynomial $p$, by setting $\GSUP=\algexa(p)$, we obtain a set $\GSUP$ satisfying $\SOSS(p,\GSUP)$. Now, we check whether or not $p$ is a split polynomial related to this $\GSUP$. And if it is, the original problem can be reduced to several smaller sub-problems. The details are described formally as Algorithm \ref{alg:relate} and Algorithm \ref{alg:main}.

\begin{algorithm}
  \SetAlgoCaptionSeparator{.}
  \caption{\algrelate \label{alg:relate} }
  \DontPrintSemicolon
  \KwIn{ $p\in \QQ[\xx]$  }
  \KwOut{ The map $\RELAT$ defined by Definition \ref{def:mult}}

  ${\GSUP}=\algexa(p)$;\;
  Let $C$ be a map from $\GSUP$ to $\{$ true, false $ \}$;\;
  \lFor {$\va\in  {\GSUP }$ }{
    $C(\va)$=false;\;
  }

  Let $\VV(p,\GSUP)$ be the set defined by Definition \ref{def:vertex};\;

  Initialize  $\RELAT(\va)=\emptyset$ for any $\va\in \ZZ^n$;\; 
  \For {$\va\in \VV(p,\GSUP)$ }{
    $\RELAT(2\va)=\{\va\}$;\;
    $C(\va)$=true;\;
  }
  Let run=true;\;
  \While{run}{
    run=false;\;
    \For{$\va\in \GSUP$}{
      \If{ $C(\va)$}{
        $C(\va)$=false;\;
        \For {$\vb\in \GSUP$}{
          \If{ $\RELAT(2\va )\not \subseteq \RELAT( \va+\vb)$}
          {
            $\RELAT(\va+\vb)=\RELAT(\va+\vb)\cup  \RELAT(2\va)  $;\;
            \If{$ \va+\vb\in 2\ZZ^n$ }{
              run=true;
              $C( (\va+\vb)/2)$=true;\;
            }

          }
        }
      }
    }
  }

  \Return { $\RELAT$;}
\end{algorithm}

\begin{algorithm}[H]
  \SetAlgoCaptionSeparator{.}
  \caption{\algf \label{alg:main} }
  \DontPrintSemicolon
  \KwIn{ $p\in \QQ[\xx]$}
  \KwOut{  {\bf false that means $p$ is not SOS; or $\{q_1,\dots,q_s\}$ where $p,q_i$ satisfy Eq. (\ref{eq:sum}) numerically}}

  Let $\RELAT $ be the output of $\algrelate(p)$;\;

  \lFor {$\va\in \SUPPORT(p)$}{
    \lIf{ $\va\not \in \GSUP+\GSUP$}{
      \Return {false}; \ \ \ \ // Lemma \ref{lemcor:4}  \;
    }
  }

  \For {$\va\in \SUPPORT(p)$}{
    \If{ $p$ is a split polynomial with respect to $\RELAT(\va)$ }{
      Let $p_1,p_2$ be as in Theorem \ref{the:split};\;
      Let $R_1$ be the output of $\algf(p_1)$;\;
      Let $R_2$ be the output of $\algf(p_2)$;\;
      \lIf{ $R_1$ or $R_2$ is false}{ \Return{false};\;}
      \Return{ $R_1\cup R_2$;}
    }
  }
  \Return {$\algSOS(p,{\GSUP})$;\ \ \ \ // Notation \ref{not:1} }
\end{algorithm}

\begin{example}
We illustrate \algf\ on the polynomial $p$ in Example \ref{ex:2}. First,
\[\begin{array}{rl}
\SUPPORT(p) & =\{(0,6),(6,0),(0,4),(4,0),(2,2) \},\\
\GSUP & =\{(0,2),(0,3),(1,1),(1,2),(2,0),(2,1),(3,0) \},\\
\VV(p,\GSUP) & =\{(0,2),(0,3),(2,0),(3,0) \}.
\end{array}\]

Second,
$\RELAT((0,4))=\{(0,2) \}$, $\RELAT((0,6))=\{(0,3) \} $,  $\RELAT((4,0))=\{(2,0) \} $,  $\RELAT((6,0))=\{(3,0) \} $,  $\RELAT((2,2))=\{(0,2),(2,0) \} $. Set $T=\RELAT((2,2))=\{(2,0),(0,2)\},$ it is easy to see that $p$ is a split polynomial with respect to $T$ and $p_1=x_1^4-2x_1^2x_2^2+x_2^4, p_2=x_1^6+x_2^6$.

Third, similarly, $\algf(p_2)$ divides $p_2$ into $p_{21}=x_1^6, p_{22}=x_2^6$. Finally, $\algf(p)$ outputs ``$1.0000*x_2^4 +1.0000*x_2^6 -2.0000*x_1^2*x_2^2 +1.0000*x_1^4 +1.0000*x_1^6=
(-1.00*x_2^2+1.00*x_1^2)^2+(1.00*x_2^3)^2+(1.00*x_1^3)^2$".
\end{example}

\begin{example}[Motzkin polynomial]\label{ex:motzkin}
  $f=x_1^4x_2^2+x_1^2x_2^4-3x_1^2x_2^2+1$.

Because $\SUPPORT(f)=\{(4,2),(2,2),(2,4),(0,0)\}$ and ${\mathbf Q}=\{(0,0),(1,1),(2,1),(1,2)\}$,\\  $\algrelate(f)$ returns $\RELAT((4,2))=\{(2,1)\}$, $\RELAT((2,4))=\{(2,4)\}$, $\RELAT((2,2))=\{(1,1)\}$, $\RELAT((0,0))=\{(0,0) \}$. Then \algf($f$)\ will return false when it reaches line $8$ for $\va=(2,2)$.
\end{example}

\begin{remark}
Let $\GSUP=\algexa(p)$. By Definition \ref{def:split}, to determine whether $p$ is a split polynomial, one should check all the non-empty subsets of $\VV(p,\GSUP)$. However, this approach is obviously inefficient.
Therefore, in Algorithm \ref{alg:main} we only check whether $p$ is a split polynomial with respect to  $\RELAT(\va)$ for some $\va\in \SUPPORT(p)$. Although this incomplete check may miss some split polynomials, as is shown in the next section, it is effective in many cases.
\end{remark}

\begin{figure}[ht]
  \begin{minipage}[b]{0.30\linewidth}
    \centering
    \psset{radius=1cm}
    \psset{xunit=0.7,yunit=0.7}
    \begin{pspicture}(-0.3,-0.3)(4.5,4.5)
      \psaxes{->}(0,0)(-0.3,-0.3)(4.5,4.5)
      \psaxes[
      dy=1,
      Dy=1,
      labels=y,
      ticks=y,
      ](0,0)(-0.3,-0.3)(4.5,4.5)

      {\psset{fillstyle=ccslope,slopebegin=cyan!40,slopeend=yellow}
        \cnodeput(0,0){A}{\strut\boldmath$1$}
        \cnodeput(4,2){B}{{\tiny \strut\boldmath$x_1^4x_2^2$}}
        \cnodeput(2,4){C}{{\tiny \strut\boldmath$x_1^2x_2^4$}}
        \cnodeput(2,2){D}{{\tiny \strut\boldmath$x_1^2x_2^2$}}

      }

      \arc{A}{B}{120}
      \arc{B}{C}{650}
      \arc{C}{A}{780}
    \end{pspicture}
    \caption{Newton polytope of  Example \ref{ex:motzkin}. \label{fig:motzkin}}
  \end{minipage}
  \hspace{0.3cm}
  \begin{minipage}[b]{0.30\linewidth}
    \centering
    \psset{xunit=1.4,yunit=1.4}
    \begin{pspicture}(-0.36,-0.36)(2.3,2.3)
      \psaxes{->}(0,0)(-0.36,-0.36)(2.3,2.3)
      \psaxes[
      dy=1,
      Dy=1,
      labels=y,
      ticks=y,
      ](0,0)(-0.3,-0.3)(2.3,2.3)
      {\psset{fillstyle=ccslope,slopebegin=cyan!40,slopeend=yellow}
        \cnodeput(0,0){A}{\strut\boldmath$1$}
        \cnodeput(2,1){B}{{\tiny \strut\boldmath$x_1^2x_2$}}
        \cnodeput(1,2){C}{{\tiny \strut\boldmath$x_1x_2^2$}}
        \cnodeput(1,1){D}{{\tiny \strut\boldmath$x_1x_2$}}

      }

      \arc{A}{B}{120}
      \arc{B}{C}{650}
      \arc{C}{A}{780}

    \end{pspicture}

    \caption{${\GSUP}^0$ of  Example \ref{ex:motzkin}.}
    \label{fig:2}
  \end{minipage}	
  \hspace{0.3cm}
  \begin{minipage}[b]{0.30\linewidth}
    \centering
    \psset{xunit=1.4,yunit=1.4}
    \begin{pspicture}(-0.14,-0.14)(2.3,2.3)

      \psaxes{->}(0,0)(-0.14,-0.14)(2.3,2.3)
      \psaxes[
      dy=1,
      Dy=1,
      labels=y,
      ticks=y,
      ](0,0)(-0.3,-0.3)(2.3,2.3)

      {\psset{fillstyle=ccslope,slopebegin=cyan!40,slopeend=yellow}
        \cnodeput(0,0){A}{\strut\boldmath$1$}
        \cnodeput(2,1){B}{{\tiny \strut\boldmath$x_1^2x_2$}}
        \cnodeput(1,2){C}{{\tiny \strut\boldmath$x_1x_2^2$}}
        \cnodeput(1,1){D}{{\tiny \strut\boldmath$x_1x_2$}}
      }

      \arc{A}{B}{120}
      \arc{B}{C}{650}
      \arc{C}{A}{780}

    \end{pspicture}

    \caption{The output of $\algexa$ for  Example \ref{ex:motzkin}.}
    \label{fig:3}
  \end{minipage}	

\end{figure}

\section{Experiments}
\label{sec:exp}
The above algorithms have been implemented as a {\tt C++} program, \aisat 
.
Compilation has been  done using g++ version 4.6.3 with optimization flags -O2.
We use {\tt Singular} \cite{singular} to read polynomials from files or standard input and use {\tt Csdp} \cite{csdp}
as SDP solver. The program has been tested on many benchmarks in the literature and on lots of examples
generated randomly.

We report in this section corresponding experimental data of our program and some well-known SOS solvers, such as
\yalmip, \sostool, \sosopt.
  The matlab version is
R2011b and \sostool's version is 3.00. Both \yalmip\ and \sosopt\ are   the latest release.  The SDP solver of \yalmip, \sostool\ and \sosopt\  is  SeDuMi 1.3.

All the numerical examples listed were computed on a 64-bit Intel(R) Core(TM) i5 CPU 650 @ 3.20GHz with 4GB RAM memory and Ubuntu 12.04 GNU/Linux.

\subsection{Examples}
In this subsection, we define four classes of examples. The first class of examples are modified
from \cite{Parrilo00}, which are positive but not necessarily SOS. The second one is from \cite{Haglund97theoremsand,KYZ09}. The other two classes are sparse polynomials randomly generated by Maple's command {\tt randpoly} where
the third class of polynomials are constructed in the form of SOS.

The number of  elements in a set $Q$ is denoted by  $\#(Q)$, $\deg(p)$ denotes the total degree of a polynomial $p$, ${\rm var}(p)$ denotes the set of variables occuring in a polynomial $p$.

\subsubsection{$B_m$}
$B_m=(\sum_{i=1}^{3m+2}x_i^2)^2-2\sum_{i=1}^{3m+2}x_i^2\sum_{j=1}^mx_{i+3j+1}^2,$
where $x_{3m+2+r}=x_r$. $B_m$ is modified from \cite{Parrilo00}. For any $m\in \ZZ$, $B_m$ is homogeneous and is  a positive polynomial.


\subsubsection{$p_{i,j}$}
Monotone Column Permanent (MCP) Conjecture was given in \cite{Haglund97theoremsand}. When $n=4$, this Conjecture is equivalent to decide whether $p12, p13, p22, p23$ are positive polynomials and this case has been studied in \cite{KYZ09}. \footnote{The polynomials can be found at \url{http://www4.ncsu.edu/~kaltofen/software/mcp_conj_4/}.}


\subsubsection{$SQR(k,n,d,t)$ (see Example \ref{ex:sqr})}
$SQR(k,n,d,t)=g_1^2+\dots+g_k^2$ where  $\deg(g_i)=d$, $\#(\SUPPORT(g_i))=t$, $\#({\rm var}(g_i))=n$. 

\subsubsection{$RN(n,d)$}

$RN(n,d)= g_1^2+ g_2\sum_{i=1}^nx_i+ 100g_3^2+100 $, where  $\deg(g_1)=d, \deg(g_2)=d-3, \deg(g_3)=d-2, {\rm var}(g_i)=\{x_1,\ldots,x_n\}$. For any given  $(n,d)$ where $n\in \{5,10\}$ and $4\le  d\le 12$,  we generate $10$ corresponding polynomials.

\subsection{Results}

If we only compare the timings of different tools,
the  comparison is somehow unfair since the implementation languages are different. Since the main idea of this paper is to compute smaller set $\GSUP$ for given polynomial $p$ which make relation $\SOSS(p,\GSUP)$ hold, we also report the comparison  of the size of $\GSUP$ computed by different tools. It is reasonable to believe that  the total time of computing SOS decomposition becomes  shorter as the size of  $\GSUP$ getting   smaller if we use the same SDP solver and the cost of computing smaller $\GSUP$ is not expensive.
In fact, for all the following examples except $B_m$, the time taken in computing $\GSUP$ by \aisat\ is less than $0.1$ seconds.

We explain the notations in the following tables.
 Each $(b,s)$  for  \aisat's $\#(\GSUP)$ means \aisat\ divides  the polynomial into $b$ polynomials$p_1,\ldots,p_b$ and  $s$ is the largest number of  $\#(\GSUP_i)$ corresponding to $p_i$.
The ``\textemdash"   denotes that there
is no corresponding output. 

\begin{table}
\begin{center}
	\caption{The results on $B_m$. \label{tab:1}}
\begin{tabular}{l*{9}{c}c}
	\hline
	& 	\multicolumn{5}{c|}{$\#(\GSUP)$}     & \multicolumn{5}{c}{time(s)}
	\\
	\cline{2-11}
	Tools      & $B_1$ 	 & $B_2$ &  $B_3$ &  $B_4$ &  $B_5$ & $B_1$  & $B_2$ &  $B_3$ &  $B_4$ &  $B_5$   \\
	\hline
	\aisat   	&  (5,1)	 &  (1,33)  &  (1,55) &  (1,94)  &  (1,150)  & 0.00 & 0.15 & 0.62  & 6.79   &73.27  \\
	\yalmip		& 15 & 36 & 66 & 105 & 153   & 0.24 & 0.53 & 0.72 & 1.26   & 167.21    \\
	\sostool    & 15 & 36 & 66 & 105 & 153   & 0.30 & 0.42 & 2.09 & 21.32  & 163.05    \\
	\sosopt     & 15 & 36 &  \textemdash  & \textemdash     &  \textemdash   & 0.25 & 3.01	   & error & error & error    \\
        \hline
\end{tabular}
\end{center}
\end{table}

The results on $B_m$ by these tools are listed in Table \ref{tab:1}. The polynomials $B_1$ and $B_2$ are SOS, the others are not.  All the above tools except \sosopt\ give  correct\footnote{ The meaning of correction is that the output is right with respect to   a certain numerical error.} outputs on $B_m$.
 Although $B_i$ is  not a sparse polynomial, our algorithm can also reduce  $\#(\GSUP)$.
 When the size of polynomial is large, \sosopt\  takes so much time to solve it. This phenomenon also  occurs in the following examples. For convenience, we do not list the results of
 \sosopt\ in the following.

The results on $p_{i,j}$ by those tools are listed in Table \ref{tab:0}.

\begin{table}
	\caption{The results on  $p_{i,j}$. \label{tab:0}}
\begin{tabular}{l*{7}{c}c}
	\hline
	& 	\multicolumn{4}{c|}{$\#(\GSUP)$}     & \multicolumn{4}{c}{time(s)}
	\\
	\cline{2-9}
	Tools       & $p_{1,2}$ 	 & $p_{1,3}$ &  $p_{2,2}$ &  $p_{2,3}$  & $p_{1,2}$  & $p_{1,3}$ &  $p_{2,2}$ &  $p_{2,3}$    \\
	\hline
	\aisat   	&  (1,77)	 &  (5,15)  &  (1,62) &  (6,39)  &  1.98  & 0.01   & 1.25  & 0.19  \\
	\yalmip		& 77             & 29        & 62      & 53      & 4.93   & 1.81   & 4.97 & 4.10     \\
	\sostool        & wrong             & wrong       & 62       & wrong     & wrong     & wrong   & 3.77 & wrong    \\

        \hline
\end{tabular}
\end{table}

Table \ref{tab:2} lists the results on  examples $SQR$ (see Example \ref{ex:sqr}). We randomly generate $10$ polynomials for every $(k,n,d,t)$. All the outputs of \aisat\ and \yalmip\  are correct. Some data  corresponding to $\sostool$ are ``wrong'',
 which means that $\sostool$'s  output is wrong or there is an error occurred during its execution. For many examples of $SQR$,  $\aisat$ can divide the original polynomial into some simpler polynomials.  
 By the complexity analysis in Section \ref{sec:pro}, this division can greatly improve
 efficiency. 
 
 We demonstrate this fact by one  polynomial of group $SQR(4,5,10,3)$.
 \begin{example}
\label{ex:6}
	 $p=(-91w^4x^2yz^3-41k^4xy^2z^2-14kwx^3y^2z)^2+(-40kx^7yz+16w^    4xy+65w^2y^4)^2+(11kx^2y^6z-34k^5x^3z-18kyz^5)^2+(-26k^4w^3    xyz-35xy^6z^3-57kw^2x^2z^3)^2.$
 \end{example}
 \begin{remark}
   As explained before, $SQR$ is constructed in the form of SOS. But the polynomial is expanded
 before input to  the tools.
 \end{remark}
 
In Example \ref{ex:6}, $\aisat$ divides $p$ into four simpler polynomials $p_1,p_2,p_3,p_4$. For  each simpler polynomial $p_i$, $\aisat$ constructs a  set $\GSUP_i$  whose  size is $3$ and $\SOSS(p_i,\GSUP_i)$ holds.
 $\yalmip$ constructs one $\GSUP$ for $p$ whose size is $97$ and $\sostool$ also constructs one $\GSUP$ for $p$ whose size is
 $104$. If the time   consumed by  constructing  $\GSUP$ is short compared with total time and
assume these three tools use the same SDP solver,
 the ratio of total time of three tools is $4(3^{2+2c}): 97^{2+2c}: 104^{2+2c}$ where $ 1\le c\le2$. In fact, in our experiments, the total time of these
 three tools on this example is $0.02$ seconds, $23.91$ seconds and $48.47$ seconds, respectively.

In addition to efficiency, correction is also important. Figure \ref{fig:error} shows the number of ``wrong" of \sostool\ on every group of random polynomials $SQR$. As explained above, ``wrong" means that $\sostool$'s output is wrong or there is an error occurred during its execution. Those ``wrong"s are caused by numerical instability. Therefore, the number of ``wrong'' increases with the increase of the problem's size.

 The above experiments are all about polynomials which are SOS. Figure \ref{fig:rn} is about
 timings for refuting non-SOS  polynomials.
 {\em For all $180$ $RN$ polynomials, \aisat\ takes $1.07$ seconds to refute all of them.} And there are polynomials
 in these $180$ polynomials on  which \sostool\ cannot finish execution within $10000$ seconds. So we do not list its output.
 Figure $\ref{fig:rn}$ is the mean time of \yalmip\ for every group of polynomials.

{\begin{table}\footnotesize
	\caption{ $\#(\GSUP) $ of  random polynomials  $SQR(k,n,d,t)$ \label{tab:2}}
\begin{tabular}{l*{10}c}
	\hline
	Tools     & 1 & 2 &  3 & 4  & 5 &  6 &  7 & 8 & 9 & 10       \\
	\hline
	& 	\multicolumn{10}{c}{$\#(\GSUP) $}
	\\
	\cline{2-10}
	\hline
	& 	\multicolumn{10}{c}{$ k=4,n=5,d=5,t=3 $}
	\\
	\cline{2-10}
	\hline

	\aisat   	& (2,15) & (1,44) & (2,11) & (4,4)  & (4,4)  & (1,25)		& (4,5) & (3,9)	& (2,8)	& (2,20)  \\
	\yalmip		& 24	 & 45	  & 33     & 18     & 23     & 36		    & 22    & 20	& 15	& 25   \\
	\sostool    & 24     & 45     & 33     & 18     & 23     & 36           & 22    & 20    & 15    & 25  \\
	\hline
	& 	\multicolumn{10}{c}{$ k=4,n=5,d=10,t=3 $}
	\\
	\cline{2-10}
	\hline

	\aisat   	& (4,3) & (4,3) & (4,10) &(3,6)   & (2,7)  & (4,4)	& (4,3) & (4,3)	& (4,3)	& (2,26)  \\
	\yalmip		& 97 & 91 & 42 & 23  & 45  & 40		& 101 & 62	& 95	& 52   \\
	\sostool    & 104 & 94 & 36 & 23  & 48  & 41     & 109 & 70   & 104    & 52  \\

	\hline
	& 	\multicolumn{10}{c}{$ k=5,n=7,d=5,t=4 $}
	\\
	\cline{2-10}
	\hline

	\aisat   	& (4,7) & (5,5) & (2,13) & (5,4)  & (4,11)  & (5,4)	& (4,7) & (3,12)	& (5,5)	& (4,10)  \\
	\yalmip		& 21 & 33 & 24 & 24  & 28  & 24	& 21 & 28	& 42	& 33   \\
	\sostool    & wrong & 33 & 24 & 24  & 28  & 24     & 21 & 28   & 42    & 33  \\
	\hline
	& 	\multicolumn{10}{c}{$ k=5,n=7,d=10,t=4 $}
	\\
	\cline{2-10}
	\hline

	\aisat   	& (5,4) & (5,4) & (5,5) & (5,5)  & (5,4)  & (5,4)	& (5,4) & (5,4)	& (3,11)	& (5,4)  \\
	\yalmip		& 45 & 82 & 74 & 59  & 48  & 70	& 79 & 63	& 41	& 57   \\
	\sostool    & wrong & wrong& wrong  & 63  & 57     & 76 & wrong  & 67 & wrong   &wrong  \\

	& 	\multicolumn{10}{c}{$ k=5,n=7,d=5,t=6 $}
	\\
	\cline{2-10}
	\hline
	\aisat   	& (1,26) & (1,29) & (1,28) & (1,72)  & (1,37)  & (1,30)	& (1,27) & (4,7)	& (2,14)	& (1,61)  \\
	\yalmip		& 28 & 38 & 28 & 82  & 48  & 31	& 33 & 34	& 34	& 69   \\
	\sostool    & wrong & 38& 28  & 82  & wrong     & 31 & 33  &wrong & 34   &wrong  \\

	& 	\multicolumn{10}{c}{$ k=5,n=7,d=8,t=6 $}
	\\
	\cline{2-10}
	\hline
	\aisat   	& (4,7) & (4,6) & (4,7) & (4,7)  & (4,6)  & (4,6)	& (4,6) & (2,24)	& (4,6)	& (4,6)  \\
	\yalmip		& 38 & 34 & 71 & 121  & 51  & 57	& 75 & 100	& 47	& 29   \\
	\sostool    & 39 & wrong& wrong  & 128  & 67     & 67  &78 & 111   &52 & 31  \\
	\hline

\end{tabular}
\end{table}
}

\begin{figure}[ht]
	\centering
	\includegraphics[width=0.6\textwidth]{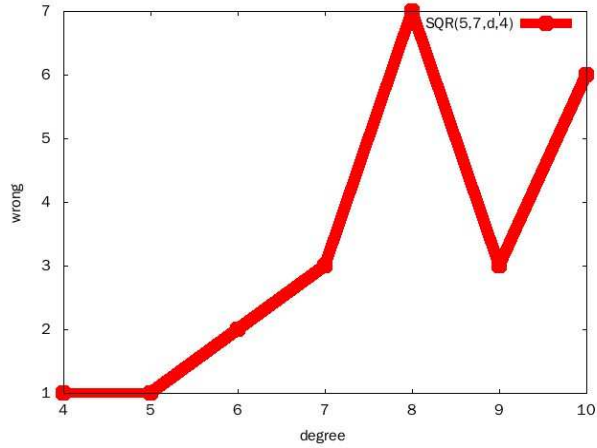}
	\caption{ The number of ``wrong" of \sostool\ on every group of random polynomials. \label{fig:error}}
\end{figure}

\begin{figure}[ht]
	\centering
	\includegraphics[width=0.6\textwidth]{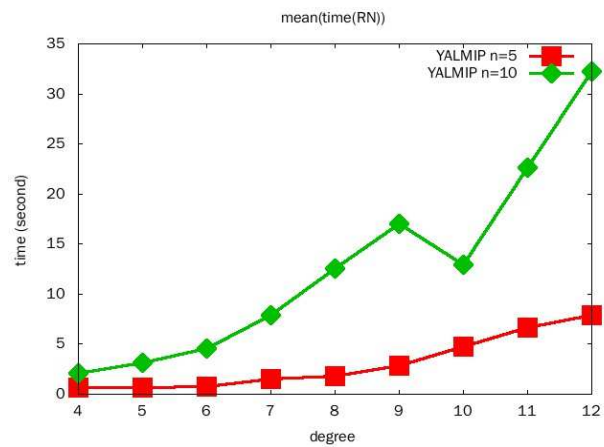}
	\caption{Mean running time of \yalmip\ on every group of $RN$ polynomials. \label{fig:rn}}
\end{figure}

\section*{Acknowledgements}
The authors would like to thank Prof. Hoon Hong for his suggestion and discussion on an early version of this paper and thank Xiaojuan Wu for helping improve the presentation of the paper.

The authors are very much grateful to the reviewer whose constructive suggestions and insightful comments not only help improve greatly  the presentation of the paper but also lead to some new results.


\end{document}